\numberwithin{equation}{section}
\definecolor{darkcandyapplered}{rgb}{0.64, 0.0, 0.0}
\definecolor{joliblanc}{RGB}{252, 251, 249}
\definecolor{bluegray}{rgb}{0.4, 0.6, 0.8}
\definecolor{ceruleanblue}{rgb}{0.16, 0.32, 0.75}
\definecolor{greydark}{RGB}{75, 71, 59}
\definecolor{oldlavender}{RGB}{75, 71, 59}
\colorlet{shadecolor}{white}
\def\defn#1{{\bf\itshape #1}}
\declaretheoremstyle[%
  spaceabove=6pt,%
  spacebelow=6pt,%
  headfont=\normalfont\itshape,%
  postheadspace=1em,%
  qed=\qedsymbol,%
  headpunct={}
]{mystyle}
\renewcommand{\qedsymbol}{\color{oldlavender}{$\blacksquare$}}
\newtheoremstyle{mytheoremstyle} 
    {\topsep}                    
    {\topsep}                    
    {\itshape}                   
    {}                           
    {\scshape}                   
    {.}                          
    {.5em}                       
    {}  
\theoremstyle{mytheoremstyle}
\newtheorem{theorem}{Theorem}[section]
\newtheorem{proposition}[theorem]{Proposition}
\newtheorem{lemma}[theorem]{Lemma}
\newtheoremstyle{mydefinitionstyle} 
    {\topsep}                    
    {\topsep}                    
    {}                   
    {}                           
    {\itshape}                   
    {.}                          
    {.5em}                       
    {}  
\theoremstyle{mydefinitionstyle}
\newtheorem{definition}[theorem]{Definition}
\newtheorem*{remark}{Remark}
\newtheorem{example}[theorem]{Example}
\titleformat*{\section}{\centering\bfseries}
\titleformat*{\subsection}{\itshape}
\newcommand{\D}{\mbox{D}}
\newcommand{\partialt}{\left. \frac{\mbox{d}}{\mbox{dt}}\right| _{t=0}}
\newcommand{\R}{\mathbb{R}}
\newcommand{\ad}{\mbox{ad}}
\newcommand{\Ad}{\mbox{Ad}}
\newcommand{\SO}{SO}
\newcommand{\TG}{T}
\renewcommand{\TH}{\widetilde{T}}
\renewcommand{\NG}{N}
\newcommand{\NH}{\widetilde{N}}
\renewcommand{\SS}{\mathfrak{s}}
\newcommand{\mH}{\mathfrak{p}}
\newcommand{\nH}{\widetilde{\mathfrak{n}}}
\newcommand\restr[2]{{\left.\kern-\nulldelimiterspace #1\vphantom{\big|} \right|_{#2}}}
\def\defn#1{{\itshape #1}}
\numberwithin{equation}{section}
\title{Symplectic slice for subgroup actions}
\author{Marine Fontaine}
\date{}
\begin{document}
\maketitle

\begin{abstract}
Given a symplectic manifold endowed with a proper Hamiltonian action of a Lie group, we consider the action induced by a Lie subgroup. We propose a construction for two compatible Witt-Artin decompositions of the tangent space of the manifold, one relative to the action of the big group and one relative to the action of the subgroup. In particular, we provide an explicit relation between the respective symplectic slices.
\vspace{6pt}

\noindent \small\textit{Keywords:} Symplectic slice,  Hamiltonian dynamics, MGS normal form.

\end{abstract}

\section{Introduction}\label{Introduction}

Stability properties and bifurcations of relative equilibria is determined by a method developed by \cite{Krupa}, which states that the dynamics of an equivariant vector field in a neighbourhood of a group orbit is entirely governed by its transverse dynamics. This result uses the so-called \defn{slice coordinates} introduced by \cite{Palais}. Krupa first proves this result for compact Lie group actions and \cite{Fiedler} extend it to proper Lie group actions. The Hamiltonian analogue is studied by \cite{Mielke} and \cite{Guillemin}, as well as \cite{RobertsdeSousa} and \cite{RobertsWulffLamb}. By ``transverse dynamics'' we mean that the vector field in question can be split into two parts: one part is defined along the tangent space of the group orbit and the other part belongs to a choice of normal subspace. In Hamiltonian systems the flow of a Hamiltonian vector field with a fixed initial condition is confined to a level set of the momentum map, reflecting the conservation of momentum. The choice of normal subspace is therefore more restrictive than for general dynamical systems. Before giving its explicit form we introduce some terminologies: given a symplectic manifold $(M,\omega)$ acted on by a Lie group $G$, the action is called \defn{canonical} if it is smooth and it preserves the symplectic form $\omega$. For any element $x\in\mathfrak{g}$ of the Lie algebra $\mathfrak{g}$ of $G$ we denote by $x_M$ the vector field on $M$ generated by the action. A canonical action is \defn{Hamiltonian} if there exists a momentum map $\Phi_G:M\to\mathfrak{g}^*$ defined by the relation $\iota_{x_M}\omega=d\langle \Phi_G(\cdot),x\rangle$ for every $x\in\mathfrak{g}$. Here $\mathfrak{g}^*$ denotes the dual of the Lie algebra $\mathfrak{g}$.

\begin{definition}
	Assume that $G$ acts properly and canonically on a symplectic manifold $(M,\omega)$. In addition we assume that the action is Hamiltonian and that the associated momentum map $\Phi_G:M\to \mathfrak{g}^*$ is equivariant with respect to the action of $G$ on $M$ and the coadjoint action of $G$ on $\mathfrak{g}^*$. The quadruple $(M,\omega,G,\Phi_G)$ is called a \defn{Hamiltonian $G$-manifold}. 
\end{definition}

We fix a Hamiltonian $G$-manifold $(M,\omega,G,\Phi_G)$ and a point $m\in M$ with momentum $\mu=\Phi_G(m)$. The corresponding stabilizers for the action of $G$ on $M$ and the coadjoint action of $G$ on $\mathfrak{g}^*$ are denoted $G_m$ and $G_{\mu}$ respectively. Their respective Lie algebras are denoted $\mathfrak{g}_m$ and $\mathfrak{g}_{\mu}$. Let $G\cdot m=\lbrace g\cdot m\mid g\in G\rbrace$ be the \defn{$G$-orbit} of $m\in M$ and denote by $\mathfrak{g}\cdot m$ its tangent space at $m$. Elements of $\mathfrak{g}\cdot m$ are vectors of the form $x_M(m):=\partialt \mbox{exp}(tx)\cdot m$, where $x\in\mathfrak{g}$ and $\mbox{exp}:\mathfrak{g}\to G$ is the group exponential. A \defn{symplectic slice} $\NG_1$ at $m$ is a $G_m$-invariant subspace of $(T_mM,\omega(m))$ defined by

\begin{equation}\label{NG_1}
		\NG_1:=\ker(\D\Phi_G(m))/\mathfrak{g}_{\mu}\cdot m,
\end{equation}
where $\D\Phi_G(m)$ is the differential of $\Phi_G$ at $m$. It is endowed with a symplectic structure $\omega_{\NG_1}$ coming from $\omega(m)$, and a linear Hamiltonian action of $G_m$ that makes it a Hamiltonian $G_m$-space.  This subspace is of particular interest for the study of stability, persistence and bifurcations of relative equilibria (cf. \cite{PatrickRoberts,LermanSinger,OrtegaRatiu}, as well as \cite{MontaldiRodriguez}). The construction of a symplectic slice is based on a \defn{Witt-Artin decomposition} (relative to the $G$-action) of $T_mM$ i.e a decomposition into four $G_m$-invariant subspaces:

\begin{equation}\label{intro: witt G}
		T_mM=\TG_0\oplus\TG_1\oplus\NG_0\oplus\NG_1
\end{equation}
with respect to which the skew-symmetric matrix associated to $\omega(m)$ has a specific normal form. The part $\TG_0\oplus \TG_1=\mathfrak{g}\cdot m$ corresponds to the directions tangent to $G\cdot m$ whereas the part $\NG_0\oplus \NG_1$ is a choice of normal subspaces such that $\TG_0\oplus \NG_1=\ker\left(\D\Phi_G(m)\right)$. This decomposition first appears in \cite{witt} for symmetric bilinear forms. The construction involves choices, namely the subspaces $\TG_1,\NG_0$ and $\NG_1$.

Let $H\subset G$ be a Lie subgroup with Lie algebra $\mathfrak{h}$ and inclusion map $i_{\mathfrak{h}}:\mathfrak{h}\hookrightarrow \mathfrak{g}$. The dual map $\mathfrak{i}_{\mathfrak{h}}^*:\mathfrak{g}^*\to \mathfrak{h}^*$ is given by $i_{\mathfrak{h}}^*(\lambda)=\restr{\lambda}{\mathfrak{h}}$ which is the restriction of the linear form $\lambda$ to the subalgebra $\mathfrak{h}$. Note that by definition, the projection $i_{\mathfrak{h}}^*:\mathfrak{g}^*\to \mathfrak{h}^*$ is $H$-equivariant. As the action of $H$ on $M$ is still Hamiltonian, it admits a momentum map $\Phi_H:M\to \mathfrak{h}^*$ given by $\Phi_H=i_{\mathfrak{h}}^*\circ\Phi_G$. Therefore $(M,\omega,H,\Phi_H)$ is a Hamiltonian $H$-manifold and we call $\Phi_H$ the \defn{induced momentum map}. In this case, we can also consider a Witt-Artin decomposition of $T_mM$ relative to the $H$-action:

\begin{equation}\label{intro: witt H}
		T_mM=\TH_0\oplus\TH_1\oplus\NH_0\oplus \NH_1.
\end{equation}
In particular, the $H_m$-invariant subspace $\NH_1$ is a symplectic slice for the $H$-action. It is chosen such that

\begin{equation}
	\NH_1:=\ker(\D\Phi_H(m))/\mathfrak{h}_{\alpha}\cdot m,
\end{equation}
where $\alpha:=\Phi_H(m)$ is the restriction of the linear form $\mu\in\mathfrak{g}^*$ to $\mathfrak{h}$. In general two arbitrary decompositions \eqref{intro: witt G} and \eqref{intro: witt H} cannot be compared.

In the study of explicit symmetry breaking phenomenons, Hamiltonian equations are perturbed in a way that the symmetry group $G$ breaks into one of its subgroup $H$. This phenomenon is studied by many authors. References for the non-Hamiltonian case are for instance \cite{MR1248622} or \cite{MR2092068}. Some aspects of the Hamiltonian case are studied in \cite{MR879691}, \cite{Grasbi}, \cite{MR2684603} or \cite{thesis}. The stability properties of the perturbed system rely on a symplectic slice relative to the $H$-action on $M$, which is ``bigger'' than a slice relative to the $G$-action.  This leads us to find explicit relations between $\NG_1$ and $\NH_1$. It is implicitly used in \cite{Grasbi} that if $G$ is a torus and $H$ is a subtorus, both acting freely on $M$, a symplectic slice $\NH_1$ at $m$ can be chosen of the form

\begin{equation}\label{grasbi}
		\NH_1= \NG_1\oplus X_m
\end{equation}
for some subspace $X_m\subset T_mM$ isomorphic to $\mathfrak{g}/\mathfrak{h}\oplus (\mathfrak{g}/\mathfrak{h})^*$. We generalize this observation for non-abelian connected Lie groups and non-free actions but with the assumption:

\begin{equation}\label{A}\tag{A}
	G_m\mbox{ acts on }H\mbox{ by conjugation.}
\end{equation} 
This assumption is required if we want the subspaces \eqref{Hperpmu} and \eqref{m et n} below to be $G_m$-invariant. Given \eqref{A}, we construct a Witt-Artin decomposition \eqref{intro: witt G} at $m$ (relative to the $G$-action) and a Witt-Artin decomposition \eqref{intro: witt H} at $m$ (relative to the $H$-action) that are compatible in the sense that the symplectic slice $\NH_1$ for $H$ can be expressed in terms of the symplectic slice $\NG_1$ for $G$. Explicitly, $\NH_1$ can be chosen as:
\begin{equation}\label{intro:slice decomposition}
	\NH_1=\NG_1\oplus X_m\oplus \SS(G,H,\mu)\cdot m
\end{equation}
where $X_m$ is symplectomorphic to some canonical cotangent bundle $\mathfrak{b}\oplus\mathfrak{b}^*$, and $\mathfrak{s}(G,H,\mu)\cdot m$ is identified with a symplectic slice for the $H$-action on the coadjoint orbit $G\cdot \mu$ (cf. Proposition \ref{s as a slice}). In the case of free actions, $\mathfrak{b}\oplus\mathfrak{b}^*$ is some complement to $\mathfrak{h}_{\mu}\oplus\mathfrak{h}_{\mu}^*$ in $\mathfrak{g}_{\mu}\oplus\mathfrak{g}_{\mu}^*$. If in addition $G$ is abelian, we recover \eqref{grasbi} because $\mathfrak{s}(G,H,\mu)\cdot m$ is trivial.

The paper is organized as follows. In $\S$\ref{section 1} we explain how to make a specific choice of subspace $\TG_1$ in \eqref{intro: witt G} to find a compatible decomposition \eqref{intro: witt H}. In $\S$\ref{section 2}, the Symplectic Tube Theorem is recalled. This yields a specific normal form for the momentum map (cf. Theorem \ref{MGS}). In $\S$\ref{section 3} the construction introduced in $\S$\ref{section 1} is used to show that a symplectic slice $\NH_1$ can be chosen of the form \eqref{intro:slice decomposition} (cf. Theorem \ref{cor: KerJH}). The symplectic form and the momentum map on it are specified (cf. Theorem \ref{cor: KerH symplectic form} and Proposition \ref{mm splitting}). The last section discusses the construction of the other subspaces which appear in \eqref{intro: witt G} and \eqref{intro: witt H}.

\paragraph{Acknowledgements} I would like to thank James Montaldi and Miguel Rodriguez-Olmos for their comments on an early draft of the manuscript. I also thank the anonymous referees for their suggestions. This work forms a part of my Ph.D. thesis~\cite{thesis}.

\section{Witt-Artin decomposition}\label{section 1}

In this section we introduce a splitting of the Lie algebra $\mathfrak{g}$ in order to construct compatible Witt-Artin decompositions \eqref{intro: witt G} and \eqref{intro: witt H}. We first fix some notations: there are natural actions of $G$ on $\mathfrak{g}$ and $\mathfrak{g}^*$, namely the \defn{adjoint action} $\Ad:(g,x)\in G\times\mathfrak{g}\mapsto\Ad_gx\in\mathfrak{g}$ and the \defn{coadjoint action} $\Ad^*:(g,\lambda)\in G\times\mathfrak{g}^*\mapsto \Ad^*_{g^{-1}}\lambda\in\mathfrak{g}^*$. If $\langle\cdot,\cdot\rangle$ denotes the canonical pairing between $\mathfrak{g}^*$ and $\mathfrak{g}$ then $\langle \Ad^*_{g^{-1}}\lambda,x\rangle=\langle \lambda, \Ad_{g^{-1}}x\rangle$ for every $\lambda\in\mathfrak{g}^*$ and $x\in\mathfrak{g}$. The respective infinitesimal actions are given by
$\ad:(x,y)\in \mathfrak{g}\times\mathfrak{g}\mapsto\ad_xy=[x,y]\in\mathfrak{g}$ and $\ad^*:(x,\lambda)\in \mathfrak{g}\times\mathfrak{g}^*\mapsto\ad^*_x\lambda\in\mathfrak{g}^*$, where $\langle \ad_x^*\lambda,y\rangle=\langle \lambda,[x,y]\rangle$ for every $\lambda\in\mathfrak{g}^*$ and $x,y\in\mathfrak{g}$. Furthermore if $(V,\omega)$ is a symplectic vector space and $W\subset V$ is a subspace, the \defn{symplectic orthogonal} $W^{\omega}$ of $W$ in $V$ is the set of vectors $v\in V$ such that $\omega(v,w)=0$ for every $w\in W$. If $W,U$ are two subspaces such that $U\subset W\subset V$, then $U^{\perp_W}$ denotes a \defn{complement} of $U$ in $W$ so that $U\oplus U^{\perp_W}=W$ is a direct sum.

Let $m\in M$ with momentum $\mu=\Phi_G(m)$ and assume that \eqref{A} is satisfied. In particular $G_m$ acts on the stabilizer subalgebras $\mathfrak{h}_m$ and $\mathfrak{h}_{\mu}$ by mean of the adjoint action. We split the Lie algebra $\mathfrak{g}$ into three parts

\begin{equation}\label{splitting of Lie algebra g}
	\mathfrak{g}=\mathfrak{g}_m\oplus \mathfrak{m}\oplus \mathfrak{n}.
\end{equation}
for some $G_m$-invariant subspaces $\mathfrak{m}$ and $\mathfrak{n}$, chosen as described below. Since the $G$-action on $M$ is proper, the stabilizer $G_m$ is compact. The Lie subalgebra $\mathfrak{g}_m$ can thus be decomposed into a direct sum of $G_m$-invariant subspaces $\mathfrak{g}_m=\mathfrak{h}_m\oplus \mathfrak{h}_m^{\perp_{\mathfrak{g}_m}}$. Note that $\mathfrak{h}_m$ is $G_m$-invariant by assumption \eqref{A}. Similarly $\mathfrak{h}_{\mu}=\mathfrak{h}_m \oplus \mH$ for some $G_m$-invariant complement $\mH$. Then $$\mathfrak{g}_m+\mathfrak{h}_{\mu}=\mathfrak{h}_m\oplus \mathfrak{h}_m^{\perp_{\mathfrak{g}_m}}\oplus \mH.$$ Since $\mathfrak{g}_m+\mathfrak{h}_{\mu}\subset \mathfrak{g}_{\mu}$, we can choose a $G_m$-invariant complement $\mathfrak{b}:=(\mathfrak{g}_m+\mathfrak{h}_{\mu})^{\perp_{\mathfrak{g}_{\mu}}}$ so that

\begin{equation}\label{gmu decomposition}
		\mathfrak{g}_{\mu}=\underbrace{\mathfrak{h}_m\oplus \mH}_{\mathfrak{h}_{\mu}} \oplus  \mathfrak{h}_m^{\perp_{\mathfrak{g}_m}}\oplus \mathfrak{b}=\underbrace{\mathfrak{h}_m\oplus \mathfrak{h}_m^{\perp_{\mathfrak{g}_m}}}_{\mathfrak{g}_{m}} \oplus  \mH \oplus \mathfrak{b}.
\end{equation}
In particular the $G_m$-invariant subspace $\mathfrak{m}$ in \eqref{splitting of Lie algebra g} is chosen of the form $\mathfrak{m}:=\mH\oplus \mathfrak{b}$. By \eqref{gmu decomposition} it satisfies $\mathfrak{g}_{\mu}=\mathfrak{g}_m\oplus\mathfrak{m}$.

To define the subspace $\mathfrak{n}$ in \eqref{splitting of Lie algebra g} we introduce the ``symplectic orthogonal'':

\begin{equation}\label{Hperpmu}
	\mathfrak{h}^{\perp_{\mu}}:=\left\lbrace x\in\mathfrak{g}\mid x_M(m)\in (\mathfrak{h}\cdot m)^{\omega(m)}\right\rbrace.
\end{equation}
This subspace is present in the context of geometric quantization (cf. \cite{Duval}) and is $G_m$-invariant by assumption \eqref{A}. It is characterized as follows:

\begin{proposition}\label{prop: kmu}
		The conditions below are equivalent:
		
	\begin{enumerate}[label=(\roman*)]
		\item $x\in \mathfrak{h}^{\perp_{\mu}}$.
		\item $\langle \mu, [x,\eta]\rangle=0$ for every $\eta\in\mathfrak{h}$.
		\item $\ad_x^*\mu\in\mathfrak{h}^{\circ}$ where $\mathfrak{h}^{\circ}:=\lbrace \lambda\in\mathfrak{g}^*\mid \restr{\lambda}{\mathfrak{h}}=0\rbrace$ is the annihilator of $\mathfrak{h}$ in $\mathfrak{g}^*$.
	\end{enumerate}
\end{proposition}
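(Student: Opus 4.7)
The plan is to link condition (i) to condition (ii) using the defining property of the momentum map together with the equivariance of $\Phi_G$, and then to observe that (ii) $\iff$ (iii) is essentially unpacking the definitions of $\ad^*$ and of the annihilator $\mathfrak{h}^\circ$.

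For (i) $\iff$ (ii), I would compute $\omega(m)(x_M(m),\eta_M(m))$ for $\eta\in\mathfrak{h}$. Using the momentum map relation $\iota_{x_M}\omega = d\langle\Phi_G,x\rangle$, this equals the derivative of $t\mapsto\langle\Phi_G(\exp(t\eta)\cdot m),x\rangle$ at $t=0$. By equivariance of $\Phi_G$, $\Phi_G(\exp(t\eta)\cdot m)=\Ad^*_{\exp(-t\eta)}\mu$, whose derivative at $t=0$ is $-\ad^*_\eta\mu$. Hence
\[
\omega(m)(x_M(m),\eta_M(m)) = -\langle\ad^*_\eta\mu,x\rangle = -\langle\mu,[\eta,x]\rangle = \langle\mu,[x,\eta]\rangle.
\]
Therefore $x_M(m)\in(\mathfrak{h}\cdot m)^{\omega(m)}$, meaning the left-hand side vanishes for all $\eta\in\mathfrak{h}$, is equivalent to (ii).

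For (ii) $\iff$ (iii), I would just note that by definition of the coadjoint infinitesimal action, $\langle\ad_x^*\mu,\eta\rangle=\langle\mu,[x,\eta]\rangle$ for every $\eta\in\mathfrak{g}$. So (ii) asserts that $\ad_x^*\mu$ annihilates every element of $\mathfrak{h}$, which is precisely the statement $\ad_x^*\mu\in\mathfrak{h}^\circ$.

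There is no real obstacle here; the only mildly delicate point is tracking the sign correctly in the equivariance computation, i.e. remembering that coadjoint equivariance of the momentum map introduces the inverse $\exp(-t\eta)$ and hence a minus sign that is then absorbed by the antisymmetry of the bracket. Once that is in place, the three equivalences fall out of one-line manipulations.
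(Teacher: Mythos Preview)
Your argument is correct and essentially identical to the paper's: both compute $\omega(m)(x_M(m),\eta_M(m))$ via the momentum map definition and equivariance to obtain $\langle\mu,[x,\eta]\rangle$, and both observe that (ii) $\iff$ (iii) is just the definition of $\ad^*$ and the annihilator. The only cosmetic difference is that the paper passes the $\Ad^*$ directly onto the $x$-slot before differentiating, while you differentiate first to get $-\ad^*_\eta\mu$ and then pair with $x$; the sign tracking you flag is handled correctly in both versions.
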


\begin{proof}
Let $m\in M$ and $\mu=\Phi_G(m)$. We first show that $(i)\iff (ii)$.

	\begin{eqnarray*}
		x\in\mathfrak{h}^{\perp_{\mu}} & \iff & x_M(m)\in(\mathfrak{h}\cdot m)^{\omega(m)}\\
		& \iff &\omega(m)(x_M(m),\eta_M(m))=0\mbox{ for all }\eta\in\mathfrak{h}\\
		& \iff &\langle \mu,[x,\eta]\rangle=0\mbox{ for all }\eta\in\mathfrak{h}.
	\end{eqnarray*}
	
The last step above follows from the calculation: 

	\begin{eqnarray*}
		\omega(m)(x_M(m),\eta_M(m))& = &\langle D\Phi_G(m)\cdot \eta_M(m),x\rangle\\
		& = &\partialt \langle \Phi_G(\mbox{exp}(t\eta)\cdot m),x\rangle\\
		& = &\partialt \langle \Ad^*_{\mbox{exp}(-t\eta)}\Phi_G(m),x\rangle\\
		& = &\langle \Phi_G(m),\partialt \Ad_{\mbox{exp}(-t\eta)}x\rangle\\
		& = &\langle \mu,[x,\eta]\rangle.
	\end{eqnarray*}
	
Finally, $(ii)\iff (iii)$ since

	\begin{eqnarray*}
		\langle \mu,[x,\eta]\rangle=0\mbox{ for all }\eta\in\mathfrak{h} & \iff &\langle \ad^*_x\mu,\eta\rangle=0\mbox{ for all }\eta\in\mathfrak{h}\\
& \iff &\ad^*_x\mu\in\mathfrak{h}^{\circ}.
	\end{eqnarray*}
\end{proof}

Consider the projection $\alpha:=\restr{\mu}{\mathfrak{h}}\in\mathfrak{h}^*$ and observe that $\mathfrak{h}_{\alpha}\subset \mathfrak{h}^{\perp_{\mu}}$. Indeed let $x\in \mathfrak{h}_{\alpha}$, $\eta\in\mathfrak{h}$, and note that

\begin{equation}
	\langle \ad_x^*\mu,\eta\rangle=\langle \mu,[x,\eta]\rangle=\langle \alpha,[x,\eta]\rangle=\langle \ad_x^*\alpha,\eta\rangle=0.
\end{equation}
 Furthermore $\mathfrak{g}_{\mu}\subset\mathfrak{h}^{\perp_{\mu}}$ since $\mathfrak{g}_{\mu}=\lbrace x\in\mathfrak{g}\mid \ad^*_x\mu=0\rbrace$. We conclude that the inclusion $\mathfrak{g}_{\mu}+\mathfrak{h}_{\alpha}\subset\mathfrak{h}^{\perp_{\mu}}$ holds.  Using \eqref{gmu decomposition} and $\mathfrak{g}_{\mu}\cap \mathfrak{h}_{\alpha}=\mathfrak{h}_{\mu}$, we choose a $G_m$-invariant complement $\mathfrak{a}$ such that
 
\begin{equation}\label{gmu+Ha}
		\mathfrak{g}_{\mu}+\mathfrak{h}_{\alpha}=\underbrace{\mathfrak{h}_{\mu}\oplus \mathfrak{h}_m^{\perp_{\mathfrak{g}_m}}\oplus\mathfrak{b}}_{\mathfrak{g}_{\mu}}\oplus\mathfrak{a}=\underbrace{\mathfrak{h}_{\mu}\oplus\mathfrak{a}}_{\mathfrak{h}_{\alpha}}\oplus \mathfrak{h}_m^{\perp_{\mathfrak{g}_m}}\oplus\mathfrak{b}.
\end{equation}
Choose $\SS(G,H,\mu)$ to be a $G_m$-invariant complement to $\mathfrak{g}_{\mu}+\mathfrak{h}_{\alpha}$ in $\mathfrak{h}^{\perp_{\mu}}$. We can thus express  \eqref{Hperpmu} as a direct sum of $G_m$-invariant subspaces

\begin{equation}\label{Hperpmu new}
	\mathfrak{h}^{\perp_{\mu}}=\mathfrak{g}_{\mu}\oplus\mathfrak{a}\oplus \SS(G,H,\mu).
\end{equation}
In particular,

\begin{equation}\label{new q}
	\mathfrak{q}:=\mathfrak{a}\oplus \SS(G,H,\mu)
\end{equation} 
is a $G_m$-invariant complement to $\mathfrak{g}_{\mu}$ in $\mathfrak{h}^{\perp_{\mu}}$.
Finally, choosing a $G_m$-invariant complement $(\mathfrak{h}^{\perp_{\mu}})^{\perp_{\mathfrak{g}}}$ of $\mathfrak{h}^{\perp_{\mu}}$ in $\mathfrak{g}$ yields the decomposition

\begin{equation}\label{g decomposition1}
	\mathfrak{g}=\underbrace{\mathfrak{h}_{\mu}\oplus \mathfrak{h}_m^{\perp_{\mathfrak{g}_m}}\oplus\mathfrak{b}}_{\mathfrak{g}_{\mu}}\oplus \underbrace{\mathfrak{q}\oplus (\mathfrak{h}^{\perp_{\mu}})^{\perp_{\mathfrak{g}}}}_{\mathfrak{n}}.
\end{equation}
By \eqref{gmu decomposition} and \eqref{g decomposition1}, the $G_m$-invariant subspaces $\mathfrak{m}$ and $\mathfrak{n}$ of \eqref{splitting of Lie algebra g} are

\begin{equation}\label{m et n}
	\mathfrak{m}:=\mH \oplus \mathfrak{b}\quad\mbox{ and }\quad\mathfrak{n}=\mathfrak{q}\oplus (\mathfrak{h}^{\perp_{\mu}})^{\perp_{\mathfrak{g}}}.
\end{equation}

\begin{remark}
	The subspace $\mathfrak{b}\subset \mathfrak{g}_{\mu}$ is not a Lie subalgebra in general. However if $G$ is compact and $\mu$ is a regular value of $\Phi_G$, then $G_{\mu}$ is a maximal torus of $G$. In this case $K=G_mH_{\mu}$ is a subgroup of $G_{\mu}$ since $G_mH_{\mu}=H_{\mu}G_m$. It is a Lie subgroup by closedness of $G_m$ and $H_{\mu}$ and its Lie algebra is $\mathfrak{k}=\mathfrak{g}_m+\mathfrak{h}_{\mu}$. Since $\mathfrak{g}_{\mu}$ is abelian $\mathfrak{k}$ is trivially an ideal of $\mathfrak{g}_{\mu}$ making $\mathfrak{b}$ isomorphic to the Lie algebra $\mathfrak{g}_{\mu}/\mathfrak{k}$. If in addition $H_m=\mathbbm{1}$, then $K=G_m\rtimes H_{\mu}$, as $G_m$ is normal in $K$.
\end{remark}

The next theorem is a standard result. A proof can be found for example in \cite{MR2021152} or \cite{MR3242761}.

\begin{theorem}[Witt-Artin decomposition]\label{Witt}
Let $(M,\omega,G,\Phi_G)$ be a Hamiltonian $G$-manifold and let $m\in M$ with momentum $\mu=\Phi_G(m)$. Fix a splitting of $\mathfrak{g}$ into $G_m$-invariant subspaces as in \eqref{splitting of Lie algebra g}. Then the tangent space $T_mM$ at $m$ decomposes as:

	\begin{equation}
		T_mM=\TG_0\oplus \TG_1\oplus \NG_0\oplus \NG_1,
	\end{equation}
where the subspaces $\TG_0, \TG_1,\NG_0,\NG_1$ are constructed as follows:

	\begin{enumerate}[label=(\roman*)]
		\item $\TG_0:=\ker\left(\D\Phi_G(m)\right)\cap \mathfrak{g}\cdot m=\mathfrak{g}_{\mu}\cdot m$.
				
		\item $\TG_1:=\mathfrak{n}\cdot m$ which is a symplectic vector subspace of $(T_mM,\omega(m))$.\label{TG1}
		
		\item $\NG_1$ is a choice of $G_m$-invariant complement to $\TG_0$ in $\ker\left(\D\Phi_G(m)\right)$. It is a symplectic subspace of $(T_mM,\omega(m))$ with symplectic form $\omega_{\NG_1}:=\restr{\omega(m)}{\NG_1}$. This subspace is called a \defn{symplectic slice}. The linear action of $G_m$ on $\NG_1$ is Hamiltonian with momentum map $\Phi_{\NG_1}:\NG_1\to\mathfrak{g}_m^*$ given by $\langle\Phi_{\NG_1}(\nu),x\rangle=\frac{1}{2}\omega(m)(x_{\NG_1}(\nu),\nu)$ for $\nu\in\NG_1$ and $x\in\mathfrak{g}_m$. The infinitesimal generator $x_{\NG_1}(\nu)=\D x_M(m)\cdot \nu\in \NG_1$ is the linearisation of the vector field $x_M$ at $m$ (since $x\in\mathfrak{g}_m$, $x_M(m)=0$).
			
	\item $\NG_0$ is a $G_m$-invariant Lagrangian complement to $\TG_0$ in the symplectic orthogonal $(\TG_1\oplus \NG_1)^{\omega(m)}$. There is an isomorphism $f:\NG_0\to \mathfrak{m}^*$ given by $\langle f(w),y\rangle=\omega(m)\left(y_M(m),w\right)$ for every $w\in \NG_0$ and $y\in \mathfrak{m}$.\label{f definition}
	\end{enumerate}
	
Furthermore, the subspaces $\TG_1,\NG_1$ and $\TG_0\oplus\NG_0$ are mutually symplectically orthogonal.
\end{theorem}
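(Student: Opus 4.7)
The plan is to exploit the fundamental identity $\omega(m)(x_M(m),v)=\langle \D\Phi_G(m)\cdot v,x\rangle$, valid for all $x\in\mathfrak{g}$ and $v\in T_mM$, which immediately yields $\ker(\D\Phi_G(m))=(\mathfrak{g}\cdot m)^{\omega(m)}$. For (i), an element $x_M(m)$ of $\mathfrak{g}\cdot m$ lies in this kernel iff $\langle\mu,[x,y]\rangle=0$ for every $y\in\mathfrak{g}$ (by the same computation as in Proposition \ref{prop: kmu}), i.e.\ iff $x\in\mathfrak{g}_{\mu}$, so $\TG_0=\mathfrak{g}_{\mu}\cdot m$. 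For (ii), the linear surjection $\mathfrak{g}\twoheadrightarrow\mathfrak{g}\cdot m$, $x\mapsto x_M(m)$, has kernel $\mathfrak{g}_m$ and pulls $\omega(m)$ back to the Kirillov--Kostant--Souriau form $(x,y)\mapsto\langle\mu,[x,y]\rangle$ on $\mathfrak{g}$, whose radical is precisely $\mathfrak{g}_{\mu}$; since $\mathfrak{n}$ is complementary to $\mathfrak{g}_{\mu}$ in $\mathfrak{g}$, the image $\TG_1=\mathfrak{n}\cdot m$ is symplectic.

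For (iii), the radical of $\omega(m)$ restricted to $\ker(\D\Phi_G(m))=(\mathfrak{g}\cdot m)^{\omega(m)}$ equals $(\mathfrak{g}\cdot m)^{\omega(m)}\cap\mathfrak{g}\cdot m=\TG_0$, so any vector space complement $\NG_1$ to $\TG_0$ in $\ker(\D\Phi_G(m))$ is automatically symplectic. Since the $G$-action is proper, $G_m$ is compact, so averaging a projection over the Haar measure produces such a complement that is in addition $G_m$-invariant. The induced linear action of $G_m$ on $\NG_1$ preserves $\omega_{\NG_1}$, and the quadratic formula $\langle\Phi_{\NG_1}(\nu),x\rangle=\tfrac12\omega(m)(x_{\NG_1}(\nu),\nu)$ is the standard momentum map of a linear symplectic representation; the defining relation $\iota_{x_{\NG_1}}\omega_{\NG_1}=\d\langle\Phi_{\NG_1}(\cdot),x\rangle$ follows from linearity of $x_{\NG_1}$ and antisymmetry of $\omega_{\NG_1}$.

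For (iv), I would first observe that $\TG_1$ and $\NG_1$ are mutually $\omega(m)$-orthogonal (since $\NG_1\subset(\mathfrak{g}\cdot m)^{\omega(m)}$ and $\TG_1\subset\mathfrak{g}\cdot m$), and that $\TG_0$ is $\omega(m)$-orthogonal to both $\TG_1$ (using $\ad_x^*\mu=0$ for $x\in\mathfrak{g}_{\mu}$) and $\NG_1$ (since $\TG_0\subset\mathfrak{g}\cdot m$ while $\NG_1\subset(\mathfrak{g}\cdot m)^{\omega(m)}$). Hence $\TG_0\subset(\TG_1\oplus\NG_1)^{\omega(m)}$, and $\TG_0$ is itself isotropic by the same KKS calculation. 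A dimension count using $\dim\TG_0=\dim\mathfrak{g}_{\mu}-\dim\mathfrak{g}_m$, $\dim\TG_1=\dim\mathfrak{g}-\dim\mathfrak{g}_{\mu}$ and $\dim\ker(\D\Phi_G(m))=\dim T_mM-\dim\mathfrak{g}\cdot m$ yields $\dim(\TG_1\oplus\NG_1)^{\omega(m)}=2\dim\TG_0$, so $\TG_0$ is Lagrangian inside this symplectic subspace. A $G_m$-invariant Lagrangian complement $\NG_0$ is then obtained via a $G_m$-averaged compatible complex structure on $(\TG_1\oplus\NG_1)^{\omega(m)}$. The nondegenerate pairing $\TG_0\times\NG_0\to\R$ induced by $\omega(m)$, composed with the isomorphism $\mathfrak{m}\to\TG_0$, $y\mapsto y_M(m)$, produces the stated isomorphism $f:\NG_0\to\mathfrak{m}^*$.

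The mutual symplectic orthogonality of $\TG_1$, $\NG_1$ and $\TG_0\oplus\NG_0$ then follows immediately: $\TG_1\perp\NG_1$ has been shown, both are orthogonal to $\NG_0$ by construction since $\NG_0\subset(\TG_1\oplus\NG_1)^{\omega(m)}$, and both are orthogonal to $\TG_0$ as above. The chief obstacle in executing the plan is simultaneously enforcing $G_m$-invariance at every choice of complement; this reduces to averaging over the compact group $G_m$, a standard technique whose only subtlety is ensuring that the averaged complex structure on $(\TG_1\oplus\NG_1)^{\omega(m)}$ remains compatible with $\omega(m)$ so that the resulting $\NG_0$ is genuinely Lagrangian rather than merely a complement.
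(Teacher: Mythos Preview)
Your proof is correct and follows the standard route for this classical result. Note, however, that the paper does not actually prove Theorem~\ref{Witt}: it declares the theorem a standard result and refers the reader to \cite{MR2021152} and \cite{MR3242761} for a proof. Your sketch is essentially the argument one finds in those references---the identity $\ker(\D\Phi_G(m))=(\mathfrak{g}\cdot m)^{\omega(m)}$, identification of $\TG_0$ as the radical of $\omega(m)$ on both $\mathfrak{g}\cdot m$ and $\ker(\D\Phi_G(m))$, the dimension count showing $\TG_0$ is Lagrangian in $(\TG_1\oplus\NG_1)^{\omega(m)}$, and the use of $G_m$-averaging (compactness from properness) to secure invariant complements---so there is nothing to contrast methodologically. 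Your closing remark about the averaged complex structure is well placed: the clean resolution is to average a positive inner product first, then extract the compatible $J$ by polar decomposition of the operator representing $\omega(m)$, which is automatically $G_m$-equivariant and guarantees that $J\TG_0$ is a genuine $G_m$-invariant Lagrangian complement.
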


\begin{remark}\label{symplectic form on T0+N0}
	\normalfont In \ref{TG1} of Theorem \ref{Witt}, the symplectic form $\omega(m)$ restricted to $\TG_1$ coincides with the Kostant-Kirillov-Souriau symplectic form. Besides the symplectic form $\omega(m)$ restricted to $\TG_0\oplus\NG_0$ takes the form
	
	\begin{equation*}
		\omega(m)(x_M(m)+w,x'_M(m)+w')=\langle f(w'),x\rangle-\langle f(w),x'\rangle
	\end{equation*} 
	
	for $x,x'\in\mathfrak{m}, w,w'\in\NG_0$ and $f:\NG_0\to\mathfrak{m}^*$ as in \ref{f definition}. Indeed since $y_M(m)=0$ for all $y\in \mathfrak{g}_m$, the elements of $\TG_0$ are of the form $x_M(m)$ with $x\in\mathfrak{m}$. Let $x,x'\in\mathfrak{m}$ and $w,w'\in\NG_0$. As both $\TG_0$ and $\NG_0$ are Lagrangian subspaces of $\TG_0\oplus\NG_0$, $$\omega(m)(x_M(m)+w,x'_M(m)+w')=\omega(m)(x_M(m),w')+\omega(w,x'_M(m))$$ which is equal to $\langle f(w'),x\rangle-\langle f(w),x'\rangle$ by definition of $f$.
\end{remark}

\vspace{0.5cm}
Applying Theorem \ref{Witt} to $(M,\omega,G,\Phi_G)$ with the subspaces in \eqref{splitting of Lie algebra g} taken as in \eqref{m et n}, we get a decomposition 

	\begin{equation}\label{Witt Artin G}
		T_mM=\TG_0\oplus \TG_1\oplus \NG_0\oplus \NG_1
	\end{equation}
with $\TG_0=(\mathfrak{g}_m\oplus\mathfrak{p}\oplus\mathfrak{b})\cdot m$ and $\TG_1=(\mathfrak{q}\oplus(\mathfrak{h}^{\perp_{\mu}})^{\perp_{\mathfrak{g}}})\cdot m$. Note that we have some freedom in the choice of $G_m$-invariant normal subspaces $\NG_0$ and $\NG_1$. As we did previously we set $\alpha:=\Phi_H(m)=\restr{\mu}{\mathfrak{h}}$ and we define $\TH_0=\mathfrak{h}_{\alpha}\cdot m$. We shall give a specific choice of subspaces $\TH_1,\NH_0,\NH_1$ such that the tangent space of $M$ at $m$ decomposes as

	\begin{equation}\label{Witt Artin H}
		T_mM=\TH_0\oplus \TH_1\oplus \NH_0\oplus \NH_1
	\end{equation}
which is compatible with both, the decomposition \eqref{Witt Artin G} and the construction of Theorem \ref{Witt} applied to $(M,\omega, H,\Phi_H)$.

\section{Symplectic Tube Theorem}\label{section 2}

In this section we introduce a fundamental result to study both the local dynamics and the local geometry of Hamiltonian $G$-manifolds. It provides a local model for a $G$-invariant open neighbourhood of a $G$-orbit in $M$. Take a point $m\in M$ with momentum $\mu=\Phi_G(m)$ and choose a splitting as in \eqref{splitting of Lie algebra g}. Let $\NG_1$ be a symplectic slice at $m$. Since $\NG_1$ is $G_m$-invariant, there is an action of $G_m$ on the product $G\times \mathfrak{m}^*\times \NG_1$ given by 

	\begin{equation}
		k\cdot (g,\rho,\nu)=(gk^{-1},\Ad^*_{k^{-1}}\rho,k\cdot\nu).
	\end{equation}
This action is free and proper by freeness and properness of the action on the $G$-factor. In particular the orbit space $Y$ is a smooth manifold whose points are equivalence classes of the form $[g,\rho,\nu]$. We denote the orbit map by $\pi:G\times \mathfrak{m}^*\times\NG_1\to Y$. The group $G$ acts smoothly and properly on $Y$, by left multiplication on the $G$-factor. Let $\mathfrak{m}^*_0\subset \mathfrak{m}^*$ and $(\NG_1)_0\subset \NG_1$ be $G_m$-invariant neighbourhoods of zero in $\mathfrak{m}^*$ and $\NG_1$, respectively. Then

	\begin{equation}\label{T_G}
		Y_0:=G\times_{G_m}(\mathfrak{m}^*_0\times (\NG_1)_0)
	\end{equation}
is a neighbourhood of the zero section in $Y$. Given two elements $V_i=T_{(g,\rho,\nu)}\pi\cdot\left(T_eL_g\cdot\xi_i,\dot{\rho}_i,\dot{\nu}_i\right)\in T_{[g,\rho,\nu]}Y_0$ for $i=1,2$, define the closed $G$-invariant $2$-form

	\begin{eqnarray*}
		\omega_{Y_0}([g,\rho,\nu])\left(V_1,V_2\right)
		& = &\langle\dot{\rho}_2+\D\Phi_{\NG_1}(\nu)\cdot\dot{\nu}_2,\xi_1\rangle-\langle\dot{\rho}_1+\D\Phi_{\NG_1}(\nu)\cdot\dot{\nu}_1,\xi_2\rangle\\
		&&+\langle\rho+\Phi_{\NG_1}(\nu),[\xi_1,\xi_2]\rangle+\omega(m)\left((\xi_1)_M(m),(\xi_2)_M(m)\right)\\
		&&+\omega_{\NG_1}(\dot{\nu}_1,\dot{\nu}_2).\\
	\end{eqnarray*}

There is a neighbourhood $Y_0\subset Y$ as above such that the $2$-form $\omega_{Y_0}$ is non-degenerate, turning $(Y_0,\omega_{Y_0})$ into a symplectic manifold (cf. \cite{MR2021152} Proposition $7.2.2$).

Let $Z^2(\mathfrak{g})$ be the space of closed $2$-forms on $\mathfrak{g}$. Define the \defn{Chu map} $\Psi: M\to Z^2(\mathfrak{g})$ associated to the $G$-action by 

	\begin{equation}
		\Psi(m)(x,y):=\omega(m)(x_M(m),y_M(m)).
	\end{equation}
In the proof of Proposition \ref{prop: kmu} we calculated $\Psi(m)(x,y)=\langle \mu,[x,y]\rangle$. Hence $\Psi(m)$ coincides with the Kostant-Kirillov-Souriau symplectic form on the coadjoint orbit $G\cdot \mu$, whenever $x,y\in \mathfrak{n}$.

\vspace{0.5cm}
The next theorem is the well-known Symplectic Tube Theorem. It was obtained by \cite{Marle85} and generalized by \cite{Guillemin,MR1486529,MR2021152}. A proof is available in~\cite{MR2021152} (cf. Theorem $7.4.1$).

\begin{theorem}[Symplectic Tube Theorem]\label{symplectic tube thm}
	Let $(M,\omega,G,\Phi_G)$ be a Hamiltonian $G$-manifold. Let $m\in M$ with momentum $\mu=\Phi_G(m)$. Given $(Y_0,\omega_{Y_0})$ as above, there exists a $G$-invariant neighbourhood $U\subset M$ of $m$ and a $G$-equivariant symplectomorphism $$\varphi:(Y_0,\omega_{Y_0})\to (U,\restr{\omega}{U})$$ such that $\varphi([e,0,0])=m$.
\end{theorem}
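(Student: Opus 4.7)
The plan is to combine the equivariant slice (tubular neighborhood) theorem with a relative equivariant Moser argument.

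First, because the action is proper and $G_m$ is therefore compact, the differentiable slice theorem yields a $G$-equivariant diffeomorphism between a $G$-invariant neighborhood of the zero section in $G\times_{G_m}V$ and a $G$-invariant neighborhood of $m$ in $M$, for any $G_m$-invariant complement $V$ to $\mathfrak{g}\cdot m$ in $T_mM$. I would take $V=\NG_0\oplus\NG_1$ as supplied by the Witt--Artin decomposition of Theorem~\ref{Witt}, and use the $G_m$-equivariant isomorphism $f:\NG_0\to\mathfrak{m}^*$ of that theorem to identify $V\cong \mathfrak{m}^*\oplus\NG_1$. Composing these identifications, one obtains a $G$-equivariant diffeomorphism $\psi:Y_0'\to U'$ sending $[e,0,0]$ to $m$, where $Y_0'$ and $U'$ are suitably small neighborhoods of the zero section and of $m$ respectively.

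Second, pull back $\omega$ by $\psi$ to obtain a closed, non-degenerate, $G$-invariant $2$-form $\widetilde{\omega}:=\psi^*\omega$ on $Y_0'$. The task reduces to constructing a $G$-equivariant diffeomorphism $\Theta:Y_0\to Y_0'$ fixing $[e,0,0]$ with $\Theta^*\widetilde{\omega}=\omega_{Y_0}$; then $\varphi:=\psi\circ\Theta$ is the desired symplectomorphism. A key preparation is to check that $\widetilde{\omega}$ and $\omega_{Y_0}$ agree on the zero section $G\cdot[e,0,0]$. By $G$-invariance this reduces to a point-wise verification at $[e,0,0]$, where the tangent space decomposes as $(\mathfrak{g}/\mathfrak{g}_m)\oplus\mathfrak{m}^*\oplus\NG_1$, and the explicit formula for $\omega_{Y_0}$ matches the Witt--Artin decomposition of $\omega(m)$ together with the pairing $\langle f(w'),x\rangle-\langle f(w),x'\rangle$ described in Remark~\ref{symplectic form on T0+N0}.

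Third, apply the relative equivariant Moser trick. The difference $\sigma:=\widetilde{\omega}-\omega_{Y_0}$ is a closed $G$-invariant $2$-form vanishing on the zero section. A relative equivariant Poincar\'e lemma, built from the $G$-equivariant retraction $r_t([g,\rho,\nu])=[g,t\rho,t\nu]$ of $Y_0$ onto the zero section and averaging over the compact group $G_m$, produces a $G$-invariant $1$-form $\alpha$ with $d\alpha=\sigma$ and $\alpha$ vanishing on the zero section. The interpolation $\omega_t:=\omega_{Y_0}+t\sigma$ is closed and $G$-invariant for $t\in[0,1]$ and remains non-degenerate on some $G$-invariant neighborhood by continuity and compactness of $[0,1]$. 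Defining the $G$-invariant time-dependent vector field $X_t$ by $\iota_{X_t}\omega_t=-\alpha$, one has $X_t=0$ along the zero section, so the flow $\Theta_t$ exists and is $G$-equivariant on a smaller neighborhood $Y_0$; setting $\Theta:=\Theta_1$ completes the construction.

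The main obstacle I expect is producing the equivariant primitive $\alpha$ with the prescribed vanishing along a full $G$-orbit (not just at a single point). Obtaining a primitive in a tube is routine once one has the retraction to the zero section, but arranging simultaneously $G$-invariance and vanishing on the entire orbit requires care: one works first on the slice $\mathfrak{m}^*\oplus\NG_1$ via the linear radial homotopy to the origin, then transports the construction to $Y_0$ using the associated-bundle structure and a $G_m$-average to make the result invariant under the full $G$-action while preserving the boundary condition on the zero section.
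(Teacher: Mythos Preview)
Your outline is correct and follows the standard route to the Symplectic Tube Theorem: differentiable slice plus an equivariant relative Moser argument, with the matching of the two symplectic forms along the zero section checked via the Witt--Artin normal form. This is essentially the proof given in the reference the paper cites (Ortega--Ratiu, Theorem~7.4.1).

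Note, however, that the paper itself does \emph{not} prove this statement. It presents Theorem~\ref{symplectic tube thm} as a well-known result due to Marle and Guillemin--Sternberg (with later generalizations by Bates--Lerman and Ortega--Ratiu) and simply refers the reader to \cite{MR2021152} for a proof. So there is no ``paper's own proof'' to compare against; your sketch is a faithful summary of the argument in the cited source. The subtlety you flag---producing a $G$-invariant primitive $\alpha$ vanishing along the full orbit---is handled there exactly as you describe, via the fiberwise radial retraction in the associated bundle and $G_m$-averaging.
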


We call the triplet $(\varphi,Y_0,U)$ a \defn{symplectic $G$-tube} at $m$ and we also say that $(Y_0,\omega_{Y_0})$ is a \defn{symplectic local model} for $(U,\restr{\omega}{U})$. Besides the momentum map $\Phi_G:M\to\mathfrak{g}^*$ can be expressed in terms of the slice coordinates:

\begin{theorem}[Marle-Guillemin-Sternberg Normal Form]\label{MGS}
	Let $(M,\omega,G,\Phi_G)$ be a Hamiltonian $G$-manifold and let $(\varphi,Y_0,U)$ be a symplectic $G$-tube at $m\in M$. Then the $G$-action on $Y_0$ is Hamiltonian with associated momentum map $\widetilde{\Phi}_G:Y_0\to \mathfrak{g}^*$ defined by
	
	\begin{equation}\label{Jyr}
		\widetilde{\Phi}_G([g,\rho,\nu])=\Ad^*_{g^{-1}}(\Phi_G(m)+\rho+\Phi_{N_1}(\nu)).
	\end{equation}
If $G$ is connected, $\widetilde{\Phi}_G$ coincides with $\restr{\Phi_G}{U}$ when pulled back along $\varphi^{-1}$.
\end{theorem}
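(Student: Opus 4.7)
The strategy is to check, in turn: (i) that the formula $\widetilde{\Phi}_G([g,\rho,\nu])=\Ad^*_{g^{-1}}(\mu+\rho+\Phi_{N_1}(\nu))$ descends to a well-defined map on the twisted product $Y_0$; (ii) that it is $G$-equivariant; (iii) that it satisfies the defining momentum map identity with respect to $\omega_{Y_0}$; and (iv) that under the connectedness hypothesis it coincides with $\Phi_G\circ\varphi$.

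For (i), the splitting $\mathfrak{g}=\mathfrak{g}_m\oplus\mathfrak{m}\oplus\mathfrak{n}$ from \eqref{splitting of Lie algebra g} identifies $\mathfrak{g}_m^*\oplus\mathfrak{m}^*$ with a subspace of $\mathfrak{g}^*$, so the sum $\mu+\rho+\Phi_{N_1}(\nu)$ is a well-defined element of $\mathfrak{g}^*$. Using the $G_m$-equivariance of $\Phi_{N_1}$ (cf. Theorem~\ref{Witt}) and the fact that $G_m\subset G_\mu$ (so $\Ad^*_{k^{-1}}\mu=\mu$ for $k\in G_m$), a direct substitution shows $\widetilde{\Phi}_G(gk^{-1},\Ad^*_{k^{-1}}\rho,k\cdot\nu)=\widetilde{\Phi}_G(g,\rho,\nu)$. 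Property (ii) is then immediate from $\Ad^*_{h^{-1}}\Ad^*_{g^{-1}}=\Ad^*_{(hg)^{-1}}$ applied to the definition.

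The main step is (iii). Writing the infinitesimal generator of $x\in\mathfrak{g}$ at $[g,\rho,\nu]$ in the format used by $\omega_{Y_0}$, one has $x_{Y_0}([g,\rho,\nu])=T_{(g,\rho,\nu)}\pi\cdot(T_eL_g\cdot\Ad_{g^{-1}}x,0,0)$. Evaluating $\omega_{Y_0}(x_{Y_0},V)$ on a generic tangent vector $V=T\pi\cdot(T_eL_g\cdot\xi,\dot\rho,\dot\nu)$ via the explicit formula leaves three nonzero contributions: $\langle\dot\rho+\D\Phi_{N_1}(\nu)\cdot\dot\nu,\Ad_{g^{-1}}x\rangle$, $\langle\rho+\Phi_{N_1}(\nu),[\Ad_{g^{-1}}x,\xi]\rangle$, and the Chu-map term $\omega(m)((\Ad_{g^{-1}}x)_M(m),\xi_M(m))=\langle\mu,[\Ad_{g^{-1}}x,\xi]\rangle$ — the last identity being the one established in the proof of Proposition~\ref{prop: kmu}. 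On the other hand, differentiating $\langle\widetilde{\Phi}_G,x\rangle=\langle\mu+\rho+\Phi_{N_1}(\nu),\Ad_{g^{-1}}x\rangle$ along the curve $t\mapsto(g\exp(t\xi),\rho+t\dot\rho,\nu+t\dot\nu)$ yields exactly the same sum, since $\partialt\Ad_{\exp(-t\xi)}\Ad_{g^{-1}}x=[\Ad_{g^{-1}}x,\xi]$. This matches $\iota_{x_{Y_0}}\omega_{Y_0}$, which establishes that $\widetilde{\Phi}_G$ is a momentum map.

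For (iv), choose the neighbourhoods $\mathfrak{m}^*_0$ and $(N_1)_0$ connected, so that $Y_0$ is connected when $G$ is. Since $\varphi$ is a $G$-equivariant symplectomorphism, $\Phi_G\circ\varphi$ is also an equivariant momentum map on $(Y_0,\omega_{Y_0})$. The difference of two equivariant momentum maps on a connected symplectic manifold is a $G$-invariant locally constant element of $\mathfrak{g}^*$, hence a constant on $Y_0$; evaluation at $[e,0,0]$ gives $\widetilde{\Phi}_G([e,0,0])=\mu+0+\Phi_{N_1}(0)=\mu=\Phi_G(\varphi([e,0,0]))$, so the constant vanishes and the two maps agree. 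The main obstacle is the bookkeeping in (iii): one must juggle the two natural trivialisations of $TG$ (left versus right), keep the signs straight in differentiating $\Ad^*$, and recognise the Chu-map term as the missing summand that absorbs $\mu$ into the bracket factor of $\widetilde{\Phi}_G$.
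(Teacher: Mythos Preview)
Your proof is correct, but note that the paper does not actually supply its own proof of this theorem: it is stated as a standard result, with the preceding Symplectic Tube Theorem referred to \cite{MR2021152} and the normal form for the momentum map left unproved. So there is no ``paper's approach'' to compare against; what you have written is the standard direct verification that one finds (in one form or another) in the cited reference.

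A couple of minor remarks on presentation. In step~(i), the equivariance of $\Phi_{N_1}$ you invoke is with respect to the $G_m$-coadjoint action on $\mathfrak{g}_m^*$; strictly speaking one should also note that the inclusion $\mathfrak{g}_m^*\hookrightarrow\mathfrak{g}^*$ coming from the $G_m$-invariant splitting \eqref{splitting of Lie algebra g} is itself $G_m$-equivariant, so that $\Ad^*_{k^{-1}}$ on $\mathfrak{g}_m^*$ agrees with its restriction from $\mathfrak{g}^*$. In step~(iv), the phrase ``$G$-invariant locally constant element'' is slightly redundant: once both maps satisfy $\iota_{x_{Y_0}}\omega_{Y_0}=d\langle\cdot,x\rangle$, their difference is closed (hence locally constant) for each $x$, and connectedness of $Y_0$ alone forces it to be a single constant in $\mathfrak{g}^*$; equivariance then plays no further role beyond what you already used. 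These are cosmetic points --- the argument is sound.
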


\section{Compatible symplectic slices}\label{section 3}

In this section we explain how to choose the symplectic slice $(\NH_1,\omega_{\NH_1})$ at $m$ arising in \eqref{Witt Artin H}. Explicitly

	\begin{equation}
		\NH_1= \mathfrak{s}(G,H,\mu)\cdot m\oplus X_m\oplus \NG_1,
	\end{equation}
where $\SS(G,H,\mu)$ is a $G_m$-invariant complement to $\mathfrak{g}_{\mu}+\mathfrak{h}_{\alpha}$ in $\mathfrak{h}^{\perp_{\mu}}$ (cf. \eqref{gmu+Ha} and \eqref{Hperpmu new}), and $X_m\subset T_mM$ is some subspace symplectomorphic to $\mathfrak{b}\oplus\mathfrak{b}^*$ with the canonical symplectic form. We show in Lemma \ref{symplectic thing st} that $\mathfrak{s}(G,H,\mu)\cdot m$ is a symplectic subspace of $(T_mM,\omega(m))$. The next proposition provides a geometric description of $\mathfrak{s}(G,H,\mu)\cdot m$ as it appears in \cite{Perlmutter}.

\begin{proposition}\label{s as a slice}
	The subspace $\mathfrak{s}(G,H,\mu)\cdot m$ is identified with a symplectic slice at $\mu$ for the $H$-action on the coadjoint orbit $G\cdot\mu$.
\end{proposition}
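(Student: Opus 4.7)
The strategy is to apply the definition \eqref{NG_1} of a symplectic slice directly to the Hamiltonian $H$-manifold $(G\cdot\mu,\omega_{KKS},H,\restr{i_\mathfrak{h}^*}{G\cdot\mu})$ at the base point $\mu$, and then check that the resulting vector space is canonically isomorphic, as a symplectic vector space, to $\mathfrak{s}(G,H,\mu)\cdot m$. Since the induced momentum map sends $\mu$ to $\alpha$ with $H$-stabilizer algebra $\mathfrak{h}_{\alpha}$, the candidate slice is $\ker\bigl(\D(\restr{i_\mathfrak{h}^*}{G\cdot\mu})(\mu)\bigr)/\mathfrak{h}_{\alpha}\cdot\mu$.

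The first step is to compute this kernel. Using the standard identification $T_\mu(G\cdot\mu)=\{-\ad_x^*\mu\mid x\in\mathfrak{g}\}$, the differential sends $-\ad_x^*\mu$ to $-\restr{\ad_x^*\mu}{\mathfrak{h}}$, which by Proposition \ref{prop: kmu}(iii) vanishes precisely when $x\in\mathfrak{h}^{\perp_{\mu}}$. Hence the kernel equals $\mathfrak{h}^{\perp_{\mu}}\cdot\mu$. Invoking the decompositions \eqref{gmu+Ha} and \eqref{Hperpmu new}, one has $\mathfrak{h}^{\perp_{\mu}}=\mathfrak{g}_{\mu}\oplus\mathfrak{a}\oplus\mathfrak{s}(G,H,\mu)$ and $\mathfrak{h}_{\alpha}=\mathfrak{h}_{\mu}\oplus\mathfrak{a}$. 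Since $\mathfrak{g}_{\mu}$, and therefore $\mathfrak{h}_{\mu}$, annihilates $\mu$ under the coadjoint action, the kernel reduces to $(\mathfrak{a}\oplus\mathfrak{s}(G,H,\mu))\cdot\mu$ and $\mathfrak{h}_{\alpha}\cdot\mu$ to $\mathfrak{a}\cdot\mu$, so the quotient is canonically identified with $\mathfrak{s}(G,H,\mu)\cdot\mu$.

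It then remains to transfer this back to $\mathfrak{s}(G,H,\mu)\cdot m$ and to check that the identification is symplectic. The natural map $x_M(m)\mapsto -\ad_x^*\mu$ is, by equivariance of $\Phi_G$, exactly the restriction of $\D\Phi_G(m)$ to $\mathfrak{g}\cdot m$; and since $\mathfrak{s}(G,H,\mu)\cap\mathfrak{g}_{\mu}=0$ by construction, the two infinitesimal-action maps $x\mapsto x_M(m)$ and $x\mapsto -\ad_x^*\mu$ are both injective on $\mathfrak{s}(G,H,\mu)$, yielding a linear isomorphism $\mathfrak{s}(G,H,\mu)\cdot m\to\mathfrak{s}(G,H,\mu)\cdot\mu$. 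Finally, the computation already performed in the proof of Proposition \ref{prop: kmu} gives $\omega(m)(x_M(m),y_M(m))=\langle\mu,[x,y]\rangle$, which is the same value as the Kostant-Kirillov-Souriau pairing $\omega_{KKS}(\mu)(-\ad_x^*\mu,-\ad_y^*\mu)$, so the isomorphism is in fact a symplectomorphism onto the desired slice. The main obstacle is not geometric but combinatorial: one has to keep the three nested splittings \eqref{gmu decomposition}, \eqref{gmu+Ha}, \eqref{Hperpmu new} consistent so that the $\mathfrak{g}_{\mu}$ and $\mathfrak{h}_{\mu}$ summands cancel correctly when passing to the quotient, leaving exactly $\mathfrak{s}(G,H,\mu)$.
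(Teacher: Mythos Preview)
Your proof is correct and follows essentially the same route as the paper: identify the induced momentum map on $G\cdot\mu$ as $i_{\mathfrak h}^*$, compute $\ker(\D\Phi(\mu))$ via Proposition~\ref{prop: kmu} as $\mathfrak h^{\perp_\mu}\cdot\mu=(\mathfrak a\oplus\mathfrak s(G,H,\mu))\cdot\mu$, observe $\mathfrak h_\alpha\cdot\mu=\mathfrak a\cdot\mu$, and then pass from $\mathfrak s(G,H,\mu)\cdot\mu$ to $\mathfrak s(G,H,\mu)\cdot m$ using that $\mathfrak s(G,H,\mu)$ meets $\mathfrak g_\mu$ (hence $\mathfrak g_m$) trivially. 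Your additional verification that the identification is a symplectomorphism via the Kostant--Kirillov--Souriau formula is a welcome extra; the paper defers that to Lemma~\ref{symplectic thing st}.
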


\begin{proof}
	The subgroup $H$ acts on the coadjoint orbit $G\cdot \mu$ by left multiplication. Since the momentum map for the standard $G$-action on $G\cdot \mu$ is just the inclusion $G\cdot \mu\hookrightarrow\mathfrak{g}^*$, the momentum map $\Phi:G\cdot\mu\to \mathfrak{h}^*$ for the $H$-action is given by $\Phi(\Ad^*_{g^{-1}} \mu)=i^*_{\mathfrak{h}}(\Ad^*_{g^{-1}} \mu)$. The kernel of its differential is $\ker(\D\Phi(\mu))=(\mathfrak{a}\oplus\mathfrak{s}(G,H,\mu))\cdot\mu$. Indeed, denoting by $x_{\mathfrak{g}^*}(\mu)=-\ad_x^*\mu$ an element of $T_{\mu}(G\cdot\mu)$, a straightforward calculation shows that 
	
	\begin{equation*}
		x_{\mathfrak{g}^*}(\mu)\in \ker\left(\D\Phi(\mu)\right)\quad\Longleftrightarrow\quad -\ad_x^*\mu\in\mathfrak{h}^{\circ}.
	\end{equation*}
By Proposition \ref{prop: kmu}, $x\in \mathfrak{h}^{\perp_{\mu}}$. By using the identification $\mathfrak{g}^*=\mathfrak{n}^{\circ}\oplus T_{\mu}(G\cdot\mu)$ and \eqref{Hperpmu new}, $x\in \mathfrak{a}\oplus\mathfrak{s}(G,H,\mu)$. The momentum of $\mu$ is $\Phi(\mu)=i^*_{\mathfrak{h}}(\mu)=\alpha$. Hence a symplectic slice for the $H$-action on $G\cdot\mu$ is a complement to $\mathfrak{h}_{\alpha}\cdot\mu$ in $\ker(\D\Phi(\mu))$. By construction, this complement is $\mathfrak{s}(G,H,\mu)\cdot \mu$ which can be identified with $\mathfrak{s}(G,H,\mu)\cdot m$ since $\mathfrak{s}(G,H,\mu)$ has trivial intersection with $\mathfrak{g}_m$ and $\mathfrak{g}_{\mu}$.
\end{proof}

\begin{proposition}\label{KerJH}
Let $(M,\omega,G,\Phi_G)$ be a Hamiltonian $G$-manifold and let $\Phi_H:M\to \mathfrak{h}^*$ be the induced momentum map. Then $$\ker\left(\D \Phi_H(m)\right)= \ker\left(\D \Phi_G(m)\right)\oplus \mathcal{M},$$ where $\mathcal{M}\subset T_mM$ is isomorphic to $\mathfrak{q}\cdot m \oplus \mathfrak{b}^*$ as defined in \eqref{m et n}. 
\end{proposition}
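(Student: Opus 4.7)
The plan is to reduce the statement to a linear-algebraic computation in the Marle-Guillemin-Sternberg normal form provided by Theorem~\ref{MGS}. In the local model $(Y_0,\omega_{Y_0})$, the formula $\widetilde{\Phi}_G([g,\rho,\nu])=\Ad^*_{g^{-1}}(\mu+\rho+\Phi_{\NG_1}(\nu))$, differentiated at $[e,0,0]$ and read through the identification $T_mM\cong(\mathfrak{m}\oplus\mathfrak{n})\oplus\mathfrak{m}^*\oplus\NG_1$, yields
\begin{equation*}
	\D\Phi_G(m)(\xi,\dot\rho,\dot\nu)=-\ad_\xi^*\mu+\dot\rho,
\end{equation*}
and therefore, since $\Phi_H=i_\mathfrak{h}^*\circ\Phi_G$, $\D\Phi_H(m)(\xi,\dot\rho,\dot\nu)=(-\ad_\xi^*\mu+\dot\rho)|_\mathfrak{h}$. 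From this, $\ker\D\Phi_G(m)=\mathfrak{m}\cdot m\oplus\NG_1=\TG_0\oplus\NG_1$ is automatically contained in $\ker\D\Phi_H(m)$, so the whole task is to identify the quotient.

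For the extra directions I would propose $\mathcal{M}:=\mathfrak{q}\cdot m\oplus\mathfrak{b}^*$, where $\mathfrak{q}\cdot m\subset\mathfrak{n}\cdot m=\TG_1$ and $\mathfrak{b}^*$ sits inside $\NG_0\cong\mathfrak{m}^*$ via the dual splitting $\mathfrak{m}^*=\mathfrak{p}^*\oplus\mathfrak{b}^*$ coming from $\mathfrak{m}=\mH\oplus\mathfrak{b}$. The inclusion $\mathfrak{q}\cdot m\subset\ker\D\Phi_H(m)$ is immediate from Proposition~\ref{prop: kmu}\,(iii) since $\mathfrak{q}\subset\mathfrak{h}^{\perp_\mu}$. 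To obtain $\mathfrak{b}^*\subset\ker\D\Phi_H(m)$ I need $\dot b|_\mathfrak{h}=0$ for every $\dot b\in\mathfrak{b}^*$; exploiting the compactness of $G_m$ and assumption~\eqref{A}, I pick a $G_m$-invariant complement $\mathfrak{h}_\alpha^{\perp_\mathfrak{h}}$ of $\mathfrak{h}_\alpha$ in $\mathfrak{h}$ and refine the still-unconstrained $G_m$-invariant complement $(\mathfrak{h}^{\perp_\mu})^{\perp_{\mathfrak{g}}}$ so that it contains $\mathfrak{h}_\alpha^{\perp_\mathfrak{h}}$. With this choice, $\mathfrak{h}\subset\mathfrak{g}_m\oplus\mH\oplus\mathfrak{n}$, so the $\mathfrak{b}$-component of every element of $\mathfrak{h}$ vanishes and $\dot b|_\mathfrak{h}=0$ is automatic. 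Disjointness $\mathcal{M}\cap\ker\D\Phi_G(m)=0$ follows at once from $\mathfrak{q}\cap\mathfrak{g}_\mu=0$ and from $\mathfrak{b}^*\subset\NG_0$ being complementary to $\NG_1$.

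To complete the argument I would verify exhaustion. Given $v\in\ker\D\Phi_H(m)$ with coordinates $(\xi,\dot\rho,\dot\nu)$, decompose $\xi_\mathfrak{n}=q+\xi_2$ with $q\in\mathfrak{q}$, $\xi_2\in(\mathfrak{h}^{\perp_\mu})^{\perp_{\mathfrak{g}}}$, and $\dot\rho=\dot p+\dot b$. The $\mathfrak{m}$-part of $\xi$ and the $q$-part drop out of the defining equation by Proposition~\ref{prop: kmu}, and $\dot b|_\mathfrak{h}=0$ by the choice above, so the condition collapses to $(-\ad_{\xi_2}^*\mu+\dot p)|_\mathfrak{h}=0$. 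Restricting to $\mathfrak{h}_\mu\subset\mathfrak{g}_\mu$ annihilates $\ad_{\xi_2}^*\mu$ (standard property of the coadjoint action) and leaves $\dot p|_{\mathfrak{h}_\mu}=0$, hence $\dot p=0$. The residual identity $(\ad_{\xi_2}^*\mu)|_\mathfrak{h}=0$ combined with Proposition~\ref{prop: kmu} forces $\xi_2\in\mathfrak{h}^{\perp_\mu}\cap(\mathfrak{h}^{\perp_\mu})^{\perp_{\mathfrak{g}}}=0$, so $v\in\ker\D\Phi_G(m)\oplus\mathfrak{q}\cdot m\oplus\mathfrak{b}^*$, as required. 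The principal obstacle is exactly the compatibility step of the second paragraph: without the refinement that places $\mathfrak{h}_\alpha^{\perp_\mathfrak{h}}$ inside $(\mathfrak{h}^{\perp_\mu})^{\perp_{\mathfrak{g}}}$, the literal copy of $\mathfrak{b}^*$ in $\NG_0$ would fail to lie in $\ker\D\Phi_H(m)$ and one would instead have to take for $\mathcal{M}$ a graph-type subspace of the same dimension.
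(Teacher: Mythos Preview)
Your proof is correct and follows essentially the same route as the paper: pass to the MGS model, linearize $\widetilde\Phi_G$ at $[e,0,0]$ to obtain $\D\Phi_H(m)(\xi,\dot\rho,\dot\nu)=(-\ad_\xi^*\mu+\dot\rho)|_{\mathfrak h}$, and then analyze this condition on $\TG_1\oplus\NG_0$. The paper defines $\mathcal M$ abstractly as $\{z_M(m)+w\in\TG_1\oplus\NG_0:-\ad^*_z\mu+f(w)\in\mathfrak h^\circ\}$ and then argues it coincides with $\{u_M(m)+w:u\in\mathfrak q,\ f(w)\in\mathfrak b^*\}$; you instead propose that set from the outset and verify inclusion, transversality, and exhaustion. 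The core computation---remove the $\mathfrak q$-part via Proposition~\ref{prop: kmu}, restrict to $\mathfrak h_\mu$ to kill $\dot p$, then force $\xi_2=0$ from $\mathfrak h^{\perp_\mu}\cap(\mathfrak h^{\perp_\mu})^{\perp_{\mathfrak g}}=0$---is identical in both.

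The one genuine difference is the point you single out as the principal obstacle. The paper asserts $\langle\beta,\eta\rangle=0$ for all $\eta\in\mathfrak h$ and $\beta\in\mathfrak b^*$, justifying it only by ``$\mathfrak g_\mu\cap\mathfrak h=\mathfrak h_\mu$ on which $\beta$ vanishes''; as you correctly note, this does not by itself control the $\mathfrak b$-component of elements of $\mathfrak h\setminus\mathfrak h_\alpha$. Your refinement---choosing the still-free complement $(\mathfrak h^{\perp_\mu})^{\perp_{\mathfrak g}}$ to contain a $G_m$-invariant $\mathfrak h_\alpha^{\perp_{\mathfrak h}}$, so that $\mathfrak h\subset\mathfrak g_m\oplus\mH\oplus\mathfrak n$---makes the step honest, and it is exactly the compatibility the paper later relies on tacitly in \S\ref{section 4} when writing $\nH\subset(\mathfrak h^{\perp_\mu})^{\perp_{\mathfrak g}}$. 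Since the proposition only asserts an isomorphism, either route suffices; yours makes the hidden choice explicit, while the paper's graph-type description of $\mathcal M$ would also survive without the refinement once one drops the intermediate equality claim.
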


\begin{remark}
Note that we do not need the assumption of Theorem \ref{MGS} that $G$ is connected because the statement only depends on the differential.
\end{remark}

\begin{proof}
	It is clear from the definitions that there is an inclusion of subspaces

	\begin{equation}\label{kerg in kerk}
		\ker\left(D\Phi_G(m)\right)\subset \ker\left(D\Phi_H(m)\right).
	\end{equation} 
Let $\left(\varphi, G\times_{G_m} \left(\mathfrak{m}^*_0\times (\NG_1)_0\right),U\right)$ be a symplectic $G$-tube at $m$. Let

	\begin{equation*}\label{Tphi}
		T_m\varphi^{-1}: \TG_0\oplus \TG_1\oplus \NG_0\oplus \NG_1 \to T_{\varphi^{-1}(m)}\left( G\times_{G_m} \left(\mathfrak{m}^*\times \NG_1\right)\right).
	\end{equation*}
be the linearisation of $\varphi^{-1}$ at $m$. For $x+y\in\mathfrak{g}_{m}\oplus \mathfrak{m}$ and $z\in \mathfrak{n}$ it is given by

	\begin{equation*}
		T_m\varphi^{-1}\cdot ((x+y)_M(m)+z_M(m)+w+\nu)=T_{(e,0,0)}\pi\cdot (x+y+z,f(w),\nu)
	\end{equation*}
where $\pi:G\times \mathfrak{m}^*\times \NG_1\to G\times_{G_m}\left(\mathfrak{m}^*\times \NG_1\right)$ is the orbit map. By definition, the subspace $\ker\left(D\Phi_H(m)\right)$ consists of the elements $$((x+y)_M(m)+z_M(m)+w+\nu)\in \TG_0\oplus \TG_1\oplus \NG_0\oplus \NG_1$$ satisfying $D(\restr{\Phi_H}{U}\circ \varphi\circ \pi)(e,0,0)\cdot (x+y+z,f(w),\nu)=0.$ Equivalently 

	\begin{equation}\label{KerJH: eq1}
		\partialt \restr{\Phi_H}{U}\circ \varphi\left([\exp({t(x+y+z)}),tf(w),t\nu]\right)=0
	\end{equation}
Since $G$ is connected, we can use Theorem \ref{MGS} to write $\restr{\Phi_H}{U}\circ \varphi=i^*_{\mathfrak{h}}\circ \widetilde{\Phi}_G$ with $\widetilde{\Phi}_G$ as in \eqref{Jyr}. Equation \eqref{KerJH: eq1} becomes

	\begin{equation*}
		\partialt i^*_{\mathfrak{h}}\left(\Ad^*_{\exp(-t(x+y+z))}\left(\mu+tf(w)+\Phi_{\NG_1}(t\nu)\right)\right)=i^*_{\mathfrak{h}}\left(-\ad^*_z\mu+f(w)\right)=0.
	\end{equation*}
Then $-\ad^*_z\mu+f(w)\in\mathfrak{h}^{\circ}$ since the kernel of $i^*_{\mathfrak{h}}$ is equal to $\mathfrak{h}^{\circ}$. We conclude that
$\ker\left(\D\Phi_H(m)\right)= \ker\left(\D\Phi_G(m)\right)\oplus\mathcal{M}$ where

	\begin{equation}\label{M}
		\mathcal{M}:=\lbrace z_M(m)+w\in \TG_1\oplus \NG_0\mid -\ad^*_z\mu+f(w)\in \mathfrak{h}^{\circ}\rbrace.
	\end{equation}

It remains to show that $\mathcal{M}$ is isomorphic to $\mathfrak{q}\cdot m \oplus\mathfrak{b}^*$. By construction $$\TG_1=\mathfrak{n}\cdot m=(\mathfrak{q}\oplus (\mathfrak{h}^{\perp_{\mu}})^{\perp_{\mathfrak{g}}})\cdot m$$ and $\NG_0$ is isomorphic to $\mathfrak{m}^*= \mH^*\oplus \mathfrak{b}^*$. An element $z_M(m)+w\in\mathcal{M}$ can thus be written uniquely as $u_M(m)+v_M(m)+w$ for some unique elements $u\in \mathfrak{q}, v\in (\mathfrak{h}^{\perp_{\mu}})^{\perp_{\mathfrak{g}}}$ and $w\in \NG_0$. In addition, we set $f(w)=\pi+\beta$ for $\pi\in \mH^*$ and $\beta\in\mathfrak{b}^*$. By definition of $\mathcal{M}$ the following relation holds:

	\begin{equation}\label{dec}
		\langle -\ad^*_{u+v}\mu+\pi+\beta,\eta\rangle=0\quad\mbox{ for every }\quad\eta\in\mathfrak{h}.
	\end{equation}
From the decomposition $$\mathfrak{g}^*_{\mu}=\mathfrak{h}^*_{\mu}\oplus (\mathfrak{h}_m^{\perp_{\mathfrak{g}_m}})^*\oplus \mathfrak{b}^*,$$ we see that $\langle \beta, \eta\rangle=0$ for every $\eta\in \mathfrak{h}$ since $\mathfrak{g}_{\mu}\cap\mathfrak{h}=\mathfrak{h}_{\mu}$ on which $\beta$ vanishes. In addition, $\langle -\ad^*_{u+v}\mu,\eta\rangle=\langle -\ad_v^*\mu,\eta\rangle$ for every $\eta\in\mathfrak{h}$
as $u\in \mathfrak{q}\subset \mathfrak{h}^{\perp_{\mu}}$. Hence \eqref{dec} reduces to 

	\begin{equation}
		\langle -\ad_v^*\mu+\pi,\eta\rangle =0\quad\mbox{ for every }\quad\eta\in\mathfrak{h}.
	\end{equation}
In particular, if $\eta\in\mathfrak{h}_{\mu}$, we are left with $\langle \pi,\eta\rangle=0$ and thus $\pi=0$. Since $\langle -\ad_v^*\mu,\eta\rangle=0$ for every $\eta\in \mathfrak{h}$, this implies that $v\in \mathfrak{h}^{\perp_{\mu}}\cap (\mathfrak{h}^{\perp_{\mu}})^{\perp_{\mathfrak{g}}}=\lbrace 0\rbrace$. Therefore the element $z_M(m)+w$ we started with is such that $z=u\in \mathfrak{q}$ and $f(w)=\beta\in\mathfrak{b}^*$.

Conversely, it is straightforward to check from the argument above that an element $z_M(m)+w\in \mathfrak{q}\cdot m\oplus \NG_0$ such that $f(w)=\beta\in \mathfrak{b}^*$ satisfies $-\ad^*_z\mu+\beta\in\mathfrak{h}^{\circ}$. We showed that $$\mathcal{M}=\lbrace u_M(m)+w\in\mathfrak{q}\cdot m\oplus\NG_0\mid f(w)\in \mathfrak{b}^*\rbrace.$$ 
The isomorphism is $F:u_M(m)+w\in\mathcal{M}\mapsto (u_M(m),f(w))\in \mathfrak{q}\cdot m\oplus\mathfrak{b}^*$.
\end{proof}

\begin{theorem}[Compatible Symplectic Slice]\label{cor: KerJH}
		Given \eqref{Witt Artin G}, a symplectic slice $\NH_1$ at $m$ relative to the $H$-action can be chosen of the form

	\begin{equation}\label{NH1}
		\NH_1=\SS(G,H,\mu)\cdot m\oplus X_m\oplus\NG_1,
	\end{equation}
where $X_m=\mathfrak{b}\cdot m\oplus Y_m$ with $Y_m\subset \NG_0$ isomorphic to $\mathfrak{b}^*$. 
\end{theorem}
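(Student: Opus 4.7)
The plan is to derive the decomposition directly from Proposition \ref{KerJH} by bookkeeping the summands. I would begin by writing $\ker(\D\Phi_H(m)) = \ker(\D\Phi_G(m)) \oplus \mathcal{M}$ (Proposition \ref{KerJH}), then substitute the $G$-Witt-Artin piece $\ker(\D\Phi_G(m)) = \TG_0 \oplus \NG_1 = \mathfrak{g}_{\mu}\cdot m \oplus \NG_1$ and the explicit description of $\mathcal{M}$ obtained in that proof, namely
\begin{equation*}
   \mathcal{M} = \{\, u_M(m) + w \in \mathfrak{q}\cdot m \oplus \NG_0 \mid f(w) \in \mathfrak{b}^*\,\}.
\end{equation*}

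Next I would unpack both factors using the Lie algebra decompositions of \S\ref{section 1}. On the $\TG_0$ side, \eqref{gmu decomposition} gives $\mathfrak{g}_{\mu}=\mathfrak{g}_m\oplus\mH\oplus\mathfrak{b}$, and since $\mathfrak{g}_m\cdot m = 0$ while $\mH\subset\mathfrak{h}_\mu$, this collapses to $\mathfrak{g}_\mu\cdot m = \mathfrak{h}_\mu\cdot m \oplus \mathfrak{b}\cdot m$. On the $\mathcal{M}$ side, set $Y_m := f^{-1}(\mathfrak{b}^*)\subset \NG_0$; since $\mathfrak{q}\cdot m \subset \TG_1$ and $\NG_0$ meet trivially, $\mathcal{M} = \mathfrak{q}\cdot m \oplus Y_m$, and using $\mathfrak{q}=\mathfrak{a}\oplus\SS(G,H,\mu)$ from \eqref{new q}, this becomes $\mathfrak{a}\cdot m \oplus \SS(G,H,\mu)\cdot m \oplus Y_m$. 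The isomorphism $Y_m\cong\mathfrak{b}^*$ is simply $\restr{f}{Y_m}$, which is bijective onto $\mathfrak{b}^*$ because $f$ is an isomorphism and $\mathfrak{m}^*=\mH^*\oplus\mathfrak{b}^*$ splits accordingly.

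Assembling these pieces yields
\begin{equation*}
   \ker(\D\Phi_H(m)) = \bigl(\mathfrak{h}_\mu\cdot m \oplus \mathfrak{a}\cdot m\bigr) \,\oplus\, \SS(G,H,\mu)\cdot m \,\oplus\, \mathfrak{b}\cdot m \,\oplus\, Y_m \,\oplus\, \NG_1.
\end{equation*}
By \eqref{gmu+Ha}, $\mathfrak{h}_\alpha = \mathfrak{h}_\mu \oplus \mathfrak{a}$, so the parenthesized summand is exactly $\TH_0 = \mathfrak{h}_\alpha\cdot m$. Defining $X_m := \mathfrak{b}\cdot m \oplus Y_m$ and $\NH_1 := \SS(G,H,\mu)\cdot m \oplus X_m \oplus \NG_1$ therefore produces a complement to $\TH_0$ in $\ker(\D\Phi_H(m))$, which is the defining property of a symplectic slice.

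Finally I would verify the remaining conditions from Theorem \ref{Witt} applied to $(M,\omega,H,\Phi_H)$: the subspaces $\SS(G,H,\mu)$, $\mathfrak{b}$ are $G_m$-invariant by construction, $\NG_1$ is $G_m$-invariant by Theorem \ref{Witt}, and $Y_m=f^{-1}(\mathfrak{b}^*)$ is $G_m$-invariant because $f$ is $G_m$-equivariant and $\mathfrak{b}^*$ is $G_m$-invariant; since $H_m\subset G_m$, each summand is $H_m$-invariant, and hence so is $\NH_1$. Non-degeneracy of $\restr{\omega(m)}{\NH_1}$ is then automatic from Theorem \ref{Witt}\,(iii). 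The argument is essentially pure bookkeeping; the only point requiring care is checking that every sum above remains direct, which follows from the direct-sum structure of \eqref{g decomposition1} together with the transversality of $\TG_1$ and $\NG_0$ built into the $G$-Witt-Artin decomposition.
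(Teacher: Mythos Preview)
Your proof is correct and follows essentially the same route as the paper's: both start from Proposition \ref{KerJH}, expand $\ker(\D\Phi_G(m))=\mathfrak{g}_\mu\cdot m\oplus\NG_1$ via \eqref{gmu decomposition}, split $\mathcal{M}\cong\mathfrak{q}\cdot m\oplus Y_m$ with $Y_m=f^{-1}(\mathfrak{b}^*)$, and then regroup using $\mathfrak{h}_\alpha=\mathfrak{h}_\mu\oplus\mathfrak{a}$ and $\mathfrak{q}=\mathfrak{a}\oplus\SS(G,H,\mu)$ to peel off $\TH_0$. Your version is in fact slightly more careful than the paper's, since you explicitly check $H_m$-invariance of each summand and address directness of the sums, points the paper leaves implicit.
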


\begin{proof}
Let a Witt-Artin decomposition of $M$ as in \eqref{Witt Artin G}. Then by \eqref{gmu decomposition}

	\begin{equation}\label{2}
		\begin{array}{lcl}
				\ker\left(D\Phi_G(m)\right)
				& = &\mathfrak{g}_{\mu}\cdot m\oplus \NG_1\\ 
				& = &(\mathfrak{h}_{\mu}\oplus \mathfrak{h}_m^{\perp_{\mathfrak{g}_{m}}}\oplus \mathfrak{b})\cdot m\oplus \NG_1 \\ 
				& = &\mathfrak{h}_{\mu}\cdot m\oplus\mathfrak{b}\cdot m\oplus \NG_1.
		\end{array}
	\end{equation}
By Proposition \ref{KerJH}, there is a subspace $Y_m\subset \NG_0$ isomorphic to $\mathfrak{b}^*$ such that

	\begin{equation*}
		\begin{array}{lcll}
			\ker\left(D\Phi_H(m)\right) 
			& = & \ker\left(D\Phi_G(m)\right)\oplus \mathfrak{q}\cdot m\oplus Y_m & \\ 
			& = & \mathfrak{h}_{\mu}\cdot m\oplus\mathfrak{b}\cdot m\oplus \NG_1\oplus \mathfrak{q}\cdot m\oplus Y_m & \mbox{ from \eqref{2}}.\\
		\end{array}
	\end{equation*}		

In \eqref{gmu+Ha} and \eqref{new q} we obtained $\mathfrak{h}_{\alpha}=\mathfrak{h}_{\mu}\oplus\mathfrak{a}$ and $\mathfrak{q}=\mathfrak{a}\oplus\SS(G,H,\mu)$.
Therefore $$\mathfrak{h}_{\mu}\cdot m\oplus\mathfrak{q}\cdot m=\mathfrak{h}_{\alpha}\cdot m\oplus \SS(G,H,\mu)\cdot m.$$ Setting $X_m=\mathfrak{b}\cdot m\oplus Y_m$, we conclude that

	\begin{equation}
		\ker\left(D\Phi_H(m)\right) = \mathfrak{h}_{\alpha}\cdot m\oplus \SS(G,H,\mu)\cdot m\oplus X_m\oplus \NG_1.
	\end{equation}
A symplectic slice $\NH_1$ at $m$ for the $H$-action must satisfy $$\ker\left(D\Phi_H(m)\right)=\mathfrak{h}_{\alpha}\cdot m\oplus \NH_1.$$ Hence we choose $\NH_1=\SS(G,H,\mu)\cdot m\oplus X_m\oplus \NG_1$.
\end{proof}

\begin{lemma}\label{symplectic thing st}
		The subspace $\SS(G,H,\mu)\cdot m=\left\lbrace x_M(m)\mid x\in\SS(G,H,\mu)\right\rbrace$ is a symplectic vector subspace of $(T_mM,\omega(m))$. The restriction of $\omega(m)$ on $\mathfrak{s}(G,H,\mu)\cdot m$ coincides with the Kostant-Kirillov-Souriau symplectic form.
\end{lemma}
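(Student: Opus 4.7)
The plan is to reduce both assertions to statements about the coadjoint orbit $G\cdot\mu$, using the identification of Proposition \ref{s as a slice}. The key tool is the computation carried out inside the proof of Proposition \ref{prop: kmu}: for every $x, y \in \mathfrak{g}$,
\begin{equation*}
\omega(m)\bigl(x_M(m), y_M(m)\bigr) = \langle \mu, [x, y] \rangle.
\end{equation*}
Under the infinitesimal orbit map $\mathfrak{g}\to T_\mu(G\cdot\mu)$, $x \mapsto -\ad^*_x\mu$, whose kernel is $\mathfrak{g}_\mu$, the right-hand side is precisely the Kostant-Kirillov-Souriau form $\Omega_{KKS}$ evaluated at $\mu$. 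This already proves the second assertion of the lemma: the restriction of $\omega(m)$ to $\mathfrak{s}(G,H,\mu)\cdot m$ is the pullback of $\Omega_{KKS}$ restricted to $\mathfrak{s}(G,H,\mu)\cdot\mu$.

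For the symplecticity claim, I would invoke Proposition \ref{s as a slice} to identify $\mathfrak{s}(G,H,\mu)\cdot m$ with the symplectic slice $\mathfrak{s}(G,H,\mu)\cdot\mu$ at $\mu$ for the Hamiltonian $H$-action on $(G\cdot\mu, \Omega_{KKS})$. Applying Theorem \ref{Witt} to this Hamiltonian $H$-space, a symplectic slice is by construction a symplectic subspace of the ambient tangent space, hence $\mathfrak{s}(G,H,\mu)\cdot\mu$ is symplectic in $(T_\mu(G\cdot\mu), \Omega_{KKS})$. Transporting non-degeneracy back along the form-preserving identification of the previous step shows that $\omega(m)$ is non-degenerate on $\mathfrak{s}(G,H,\mu)\cdot m$, completing the lemma.

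The one delicate point is the well-definedness of this identification. The linear maps $\mathfrak{s}(G,H,\mu)\to\mathfrak{s}(G,H,\mu)\cdot m$ and $\mathfrak{s}(G,H,\mu)\to\mathfrak{s}(G,H,\mu)\cdot\mu$ have kernels $\mathfrak{s}(G,H,\mu)\cap\mathfrak{g}_m$ and $\mathfrak{s}(G,H,\mu)\cap\mathfrak{g}_\mu$, both of which vanish: by construction \eqref{Hperpmu new}, $\mathfrak{s}(G,H,\mu)$ is a complement to $\mathfrak{g}_\mu\supset\mathfrak{g}_m$ inside $\mathfrak{h}^{\perp_\mu}$. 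Hence both maps are linear isomorphisms, and under them the pulled-back bilinear form $(x,y)\mapsto\langle\mu, [x,y]\rangle$ simultaneously represents $\omega(m)$ restricted to $\mathfrak{s}(G,H,\mu)\cdot m$ and $\Omega_{KKS}$ restricted to $\mathfrak{s}(G,H,\mu)\cdot\mu$, which is exactly what permits the symplectic structure to be transferred between the two sides.
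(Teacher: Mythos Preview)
Your argument is correct and takes a genuinely different route from the paper's. The paper works directly inside $(T_mM,\omega(m))$: it uses that $\mathfrak{n}\cdot m$ is symplectic (Theorem~\ref{Witt}\ref{TG1}), takes $x\in\mathfrak{s}(G,H,\mu)$ in the radical of $\Psi(m)|_{\mathfrak{s}(G,H,\mu)}$, and by a case analysis over the pieces $\mathfrak{a}$, $\mathfrak{s}(G,H,\mu)$, $(\mathfrak{h}^{\perp_\mu})^{\perp_{\mathfrak{g}}}$ of $\mathfrak{n}$---exploiting the symplectic orthogonalities in an $H$-Witt--Artin decomposition built from the $\NH_1$ of Theorem~\ref{cor: KerJH}---shows $\Psi(m)(x,z)=0$ for every $z\in\mathfrak{n}$, forcing $x_M(m)=0$. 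You instead transport the question to the coadjoint orbit via Proposition~\ref{s as a slice} and invoke the general fact that a symplectic slice is symplectic. This is more conceptual, and it sidesteps the paper's forward dependence on the $H$-decomposition.

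One caveat: your appeal to Theorem~\ref{Witt} for the $H$-action on $G\cdot\mu$ presupposes that this action is proper and that $\mathfrak{s}(G,H,\mu)\cdot\mu$ is $H_\mu$-invariant, neither of which has been verified (note $H_\mu\not\subset G_m$ in general). This is harmless, since the piece of Theorem~\ref{Witt}(iii) you actually need is pure symplectic linear algebra: $\ker(\D\Phi(\mu))=(\mathfrak{h}\cdot\mu)^{\Omega_{KKS}}$ has radical $(\mathfrak{h}\cdot\mu)^{\Omega_{KKS}}\cap\mathfrak{h}\cdot\mu=\mathfrak{h}_\alpha\cdot\mu$, and \emph{any} linear complement to the radical of a skew form carries a non-degenerate restriction. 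Phrasing it that way makes your argument entirely self-contained.
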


\begin{proof}
		Using \eqref{new q}, the complement to $\mathfrak{g}_{\mu}$ in $\mathfrak{g}$ defined in \eqref{m et n} reads

	\begin{equation}\label{stn}
		\mathfrak{n}=\underbrace{\mathfrak{a}\oplus\SS(G,H,\mu)}_{\mathfrak{q}}\oplus (\mathfrak{h}^{\perp_{\mu}})^{\perp_\mathfrak{g}}.
	\end{equation}
To show that $\SS(G,H,\mu)\cdot m$ is symplectic, we use that $\mathfrak{n}\cdot m=\left\lbrace z_M(m)\mid z\in\mathfrak{n}\right\rbrace$ is a symplectic vector subspace of $(T_mM,\omega(m))$. The restriction of $\omega(m)$ on $\mathfrak{n}\cdot m$ is non-degenerate and takes the form $$\Psi(m)(x,y)=\langle \mu,[x,y]\rangle.$$ Therefore $\omega(m)$ restricted to $\mathfrak{n}\cdot m$ coincides with the Kostant-Kirillov-Souriau symplectic form. Let us show that it is also non-degenerate when restricted to $\SS(G,H,\mu)\cdot m$. Assume $x\in\SS(G,H,\mu)$ is such that $\Psi(m)(x,y)=0$ for every $y\in\SS(G,H,\mu)$.
To show non-degeneracy we must show that $x_M(m)=0$. By \eqref{stn}, any $z\in\mathfrak{n}$ can be written uniquely as $z=u+y+v$ with $u\in \mathfrak{a},y\in\SS(G,H,\mu)$ and $v\in(\mathfrak{h}^{\perp_{\mu}})^{\perp_{\mathfrak{g}}}$. This yields

	\begin{equation}\label{st1}
		\Psi(m)(x,z)=\Psi(m)(x,u)+\Psi(m)(x,v)
	\end{equation}
as the term $\Psi(m)(x,y)$ vanishes by assumption. Note that $$\Psi(m)(x,u)=\langle \mu,[x,u]\rangle=0$$ since $x\in \mathfrak{h}^{\perp_{\mu}}$ by \eqref{Hperpmu new} and $u\in \mathfrak{a}\subset\mathfrak{h}$ by \eqref{gmu+Ha}. Moreover the last term of \eqref{st1} vanishes. To see this we construct a Witt-Artin decomposition at $m$ relative to the $H$-action:

	\begin{equation}\label{pf: witt-artin H}
		T_mM=\TH_0\oplus\TH_1\oplus\NH_0\oplus\NH_1
	\end{equation} 
with $\NH_1$ as in Theorem \ref{cor: KerJH}. Recall that $$\ker(D\Phi_H(m))=\TH_0 \oplus \NH_1.$$ Furthermore since $\ker(D\Phi_H(m))=(\mathfrak{h}\cdot m)^{\omega(m)}$, we can write

	\begin{equation}\label{pf: Hperpmu}
		\mathfrak{h}^{\perp_{\mu}}=\left\lbrace x\in\mathfrak{g}\mid x_M(m)\in\TH_0\oplus\NH_1\right\rbrace.
	\end{equation} 
There are two possibilities: 

	\begin{enumerate}[label=(\roman*)]
		\item If $v\in \mathfrak{h}$ then $v_M(m)\in \TH_1$ since $v\in(\mathfrak{h}^{\perp_{\mu}})^{\perp_{\mathfrak{g}}}$. The subspaces $\TH_1$ and $\NH_1$ are symplectically orthogonal. Hence $\Psi(m)(x,v)=0$. 
		\item Otherwise $v_M(m)\in \NH_0$. Indeed since $v\in(\mathfrak{h}^{\perp_{\mu}})^{\perp_{\mathfrak{g}}}$,  it cannot belong to $\TH_0\oplus \NH_1$ by \eqref{pf: Hperpmu}. Since $x_M(m)\in \SS(G,H,\mu)\cdot m \subset \NH_1$ and $\TH_0\oplus\NH_0$ and $\NH_1$ are symplectically orthogonal, we conclude that $\Psi(m)(x,v)=0$.
	\end{enumerate}
		
Therefore \eqref{st1} reduces to $\Psi(m)(x,z)=0$ for every $z\in \mathfrak{n}$. Since $\mathfrak{n}\cdot m$ is symplectic we get $x_M(m)=0$.
\end{proof}

\begin{theorem}\label{cor: KerH symplectic form}
		With respect to the splitting of Theorem \ref{cor: KerJH}, the symplectic form $\omega_{\NH_1}$ reads $\Psi(m)\oplus\omega_{X_m}\oplus\omega_{\NG_1}$ with $\Psi(m)$ as in Lemma \ref{symplectic thing st} and $$\omega_{X_m}\left(b_M(m)+w,b'_M(m)+w'\right)=\langle f(w'),b\rangle-\langle f(w),b'\rangle$$
for every $b,b'\in\mathfrak{b}$, $w,w'\in Y_m$, and $f$ as in Theorem \ref{Witt} \ref{f definition}.
\end{theorem}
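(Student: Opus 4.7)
The plan is to verify that the three summands $\SS(G,H,\mu)\cdot m$, $X_m$, and $\NG_1$ are pairwise symplectically orthogonal with respect to $\omega(m)$, and then identify the restriction of $\omega(m)$ to each summand.

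First I would establish the containments relating the three summands to the Witt-Artin pieces of \eqref{Witt Artin G}. Since $\SS(G,H,\mu)\subset\mathfrak{h}^{\perp_\mu}\subset\mathfrak{n}$ by \eqref{Hperpmu new}--\eqref{m et n}, one has $\SS(G,H,\mu)\cdot m\subset \mathfrak{n}\cdot m=\TG_1$. Since $\mathfrak{b}\subset\mathfrak{m}$ by \eqref{m et n}, one has $\mathfrak{b}\cdot m\subset \mathfrak{m}\cdot m=\TG_0$; combined with $Y_m\subset\NG_0$ by Theorem \ref{cor: KerJH}, this yields $X_m\subset \TG_0\oplus\NG_0$. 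Finally $\NG_1$ is the symplectic slice.

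Theorem \ref{Witt} asserts that the subspaces $\TG_1$, $\NG_1$ and $\TG_0\oplus\NG_0$ are mutually symplectically orthogonal. Applying this to our three summands:
\begin{itemize}
    \item $\SS(G,H,\mu)\cdot m\subset\TG_1$ and $X_m\subset\TG_0\oplus\NG_0$ gives $\omega(m)$-orthogonality between the first two.
    \item $\SS(G,H,\mu)\cdot m\subset\TG_1$ and $\NG_1$ are $\omega(m)$-orthogonal for the same reason.
    \item $X_m\subset\TG_0\oplus\NG_0$ and $\NG_1$ are $\omega(m)$-orthogonal.
\end{itemize}
This shows that the decomposition of Theorem \ref{cor: KerJH} is symplectically orthogonal, so $\omega_{\NH_1}$ splits as a direct sum of its restrictions.

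It remains to identify each restriction. The restriction of $\omega(m)$ to $\NG_1$ is $\omega_{\NG_1}$ by definition (Theorem \ref{Witt}). The restriction to $\SS(G,H,\mu)\cdot m$ is shown in Lemma \ref{symplectic thing st} to coincide with $\Psi(m)$. For the restriction to $X_m$, I would invoke Remark \ref{symplectic form on T0+N0}: since $\mathfrak{b}\subset\mathfrak{m}$ and $Y_m\subset\NG_0$, applying that formula to elements $b_M(m)+w,\ b'_M(m)+w'\in\mathfrak{b}\cdot m\oplus Y_m$ gives exactly
\[
\omega(m)(b_M(m)+w,b'_M(m)+w')=\langle f(w'),b\rangle-\langle f(w),b'\rangle,
\]
which is the stated $\omega_{X_m}$. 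There is no serious obstacle here, since the symplectic-orthogonality statements from Theorem \ref{Witt} and the explicit block form from Remark \ref{symplectic form on T0+N0} do all the work; the only thing to be careful about is that $\SS(G,H,\mu)\cdot m$ lies inside $\TG_1$ (not inside $\TG_0\oplus\NG_1$) and that $X_m$ lies inside $\TG_0\oplus\NG_0$, which are precisely what the choice of the subspace $\mathfrak{n}$ in \eqref{m et n} and the choice of $Y_m$ in Theorem \ref{cor: KerJH} were designed to guarantee.
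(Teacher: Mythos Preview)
Your argument is correct and follows essentially the same route as the paper, though you make the symplectic orthogonality of the three summands explicit (via the last assertion of Theorem \ref{Witt}) where the paper simply states the block decomposition. One small slip: the chain $\SS(G,H,\mu)\subset\mathfrak{h}^{\perp_\mu}\subset\mathfrak{n}$ is not right, since $\mathfrak{h}^{\perp_\mu}=\mathfrak{g}_\mu\oplus\mathfrak{q}$ by \eqref{Hperpmu new} and hence is \emph{not} contained in $\mathfrak{n}$; the correct inclusion is $\SS(G,H,\mu)\subset\mathfrak{q}\subset\mathfrak{n}$ via \eqref{new q}--\eqref{m et n}, which gives the conclusion you need.
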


\begin{proof}
		By Lemma \ref{symplectic thing st}, the symplectic form on $ \SS(G,H,\mu)\cdot m$ is given by the Chu map $\Psi(m)$. Denote by $\omega_{X_m}$ the restriction of $\omega(m)$ to $X_m$. It coincides with the pullback of the canonical symplectic form on $\mathfrak{b}\oplus\mathfrak{b}^*$ along the isomorphism $$b_M(m)+w\in X_m=\mathfrak{b}\cdot m\oplus Y_m\mapsto (b,f(w))\in \mathfrak{b}\oplus\mathfrak{b}^*.$$ Therefore $\omega_{X_m}\left(b_M(m)+w,b'_M(m)+w'\right)=\langle f(w'),b\rangle-\langle f(w),b'\rangle$ for all $b,b'\in\mathfrak{b}$ and $w,w'\in Y_m$. This yields the decomposition $\omega_{\NH_1}(m)=\Psi(m)\oplus\omega_{X_m}(m)\oplus\omega_{\NG_1}$ as stated.
\end{proof}

\begin{proposition}\label{mm splitting}
		With respect to the splitting of Theorem \ref{cor: KerJH}, the momentum map $\Phi_{\NH_1}:\NH_1\to \mathfrak{h}_m^*$ associated to the linear Hamiltonian $H_m$-action on $\NH_1$ decomposes as
		
	\begin{equation*}
		\langle \Phi_{\NH_1}(\widetilde{\nu}),\eta\rangle=\frac{1}{2} \langle (\ad_x^*)^2\mu,\eta\rangle+\langle -\ad_b^*f(w),\eta\rangle+\frac{1}{2}\omega_{\NG_1}\left(\eta_{\NG_1}(\nu),\nu\right)
	\end{equation*}

for every $\eta\in\mathfrak{h}_m$, where $\widetilde{\nu}=x_M(m)+(b_M(m)+w)+\nu\in\NH_1$ with $x\in\mathfrak{s}(G,H,\mu), b\in\mathfrak{b},w\in Y_m$ and $\nu\in\NG_1$.
\end{proposition}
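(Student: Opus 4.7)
The plan is to start from the general formula of Theorem~\ref{Witt}(iii), applied to the Hamiltonian $H_m$-action on the symplectic slice $(\NH_1, \omega_{\NH_1})$: for $\eta \in \mathfrak{h}_m$ and $\widetilde{\nu} \in \NH_1$,
\begin{equation*}
  \langle \Phi_{\NH_1}(\widetilde{\nu}), \eta \rangle = \tfrac{1}{2}\, \omega_{\NH_1}(\eta_{\NH_1}(\widetilde{\nu}), \widetilde{\nu}).
\end{equation*}
The key observation is that each of the three summands $\SS(G,H,\mu)\cdot m$, $X_m$, and $\NG_1$ in the splitting of Theorem~\ref{cor: KerJH} is $H_m$-invariant (since $H_m \subset G_m$ and all these subspaces are $G_m$-invariant by construction), hence the infinitesimal generator $\eta_{\NH_1}$ preserves the splitting. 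Combined with Theorem~\ref{cor: KerH symplectic form} (which says the three summands are mutually symplectically orthogonal and $\omega_{\NH_1} = \Psi(m) \oplus \omega_{X_m} \oplus \omega_{\NG_1}$), this reduces the computation to three independent pieces, one per summand.

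For the piece on $\SS(G,H,\mu)\cdot m$, I use that $\eta_M(m)=0$ implies $\eta_{\NH_1}(x_M(m)) = D\eta_M(m)\cdot x_M(m) = [\eta, x]_M(m)$, which lies back in $\SS(G,H,\mu)\cdot m$ by $G_m$-invariance of $\SS(G,H,\mu)$. Since $\omega(m)$ restricted to this subspace is the KKS form $\Psi(m)$ (Lemma~\ref{symplectic thing st}), we get $\Psi(m)([\eta,x]_M(m), x_M(m)) = \langle \mu, [[\eta,x], x]\rangle$. A short bracket identity using $\langle \mu, [[\eta,x],x]\rangle = \langle \mu, [x,[x,\eta]]\rangle$ then identifies this with $\langle (\ad_x^*)^2 \mu, \eta\rangle$, giving the first term after dividing by two.

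For the piece on $X_m = \mathfrak{b}\cdot m \oplus Y_m$, the same argument gives $\eta_{\NH_1}(b_M(m)) = [\eta, b]_M(m) \in \mathfrak{b}\cdot m$ (using $G_m$-invariance of $\mathfrak{b}$). On the $Y_m \subset \NG_0$ factor, I need that $f$ is $G_m$-equivariant, so that $f(\eta_{\NG_0}(w)) = -\ad^*_\eta f(w)$; this follows from $G$-invariance of $\omega$, the definition of $f$ in Theorem~\ref{Witt}(iv), and the infinitesimal version of the coadjoint action. Then by the explicit form of $\omega_{X_m}$ in Theorem~\ref{cor: KerH symplectic form},
\begin{equation*}
  \omega_{X_m}([\eta,b]_M(m) + \eta_{\NG_0}(w),\, b_M(m) + w) = \langle f(w), [\eta,b]\rangle - \langle -\ad_\eta^* f(w), b\rangle = 2\,\langle f(w), [\eta,b]\rangle,
\end{equation*}
which after dividing by two equals $\langle -\ad_b^* f(w), \eta\rangle$, producing the second term. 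Finally, the piece on $\NG_1$ is handled directly: since $\eta \in \mathfrak{h}_m \subset \mathfrak{g}_m$ and $\NG_1$ is the symplectic slice at $m$ for the $G$-action, Theorem~\ref{Witt}(iii) applied to $G$ gives $\tfrac{1}{2}\omega_{\NG_1}(\eta_{\NG_1}(\nu), \nu)$ for this contribution.

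The main obstacle, computationally, is the bracket manipulation and the careful bookkeeping showing $f$ is $G_m$-equivariant; conceptually, it is ensuring that the three summands are $H_m$-invariant so that $\eta_{\NH_1}$ splits compatibly with $\omega_{\NH_1}$. Once these are established, summing the three contributions yields exactly the claimed decomposition of $\langle \Phi_{\NH_1}(\widetilde{\nu}), \eta\rangle$.
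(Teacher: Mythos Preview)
Your proposal is correct and follows essentially the same approach as the paper's proof: both start from the quadratic momentum formula $\langle \Phi_{\NH_1}(\widetilde{\nu}),\eta\rangle=\tfrac{1}{2}\omega_{\NH_1}(\eta_{\NH_1}(\widetilde{\nu}),\widetilde{\nu})$, use Theorem~\ref{cor: KerH symplectic form} to split $\omega_{\NH_1}$ into the three blocks, compute $\eta_{\NH_1}$ on each summand via $[\eta,\cdot]_M(m)$ and the $H_m$-equivariance of $f$, and perform the same bracket manipulation to obtain $\langle(\ad_x^*)^2\mu,\eta\rangle$ and $\langle -\ad_b^* f(w),\eta\rangle$. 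Your explicit justification that the three summands are $H_m$-invariant (so that $\eta_{\NH_1}$ respects the splitting) is a point the paper leaves implicit, but otherwise the arguments coincide.
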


\begin{proof}
	By linearity of the Hamiltonian $H_m$-action on $\NH_1$, the momentum map $\Phi_{\NH_1}$ takes the form
	
	\begin{equation}\label{form}
		\langle \Phi_{\NH_1}(\widetilde{\nu}),\eta\rangle=\frac{1}{2}\omega_{\NH_1}\left(\eta_{\NH_1}(\widetilde{\nu}),\widetilde{\nu}\right)
	\end{equation}
for all $\widetilde{\nu}\in\NH_1$ and $\eta\in\mathfrak{h}_m$. With respect to the decomposition of $\NH_1$ in Theorem \ref{cor: KerJH}, we write $$\widetilde{\nu}=x_M(m)+(b_M(m)+w)+\nu\in\NH_1$$ where $x\in\mathfrak{s}(G,H,\mu), b\in\mathfrak{b},w\in Y_m$ and $\nu\in\NG_1$. For $\eta\in\mathfrak{h}_m$ we get

	\begin{eqnarray*}
		\eta_{\NH_1}(x_M(m)) &=& \partialt \mbox{exp}(t\eta)\cdot x_M(m)\\
		& = &-\pounds_{\eta_M}x_M(m) \quad(\mbox{since}\;\eta\in\mathfrak{h}_m)\\
		& = &[x_M,\eta_M](m)\\
		& = &[\eta,x]_M(m).
	\end{eqnarray*}

Similary $\eta_{\NH_1}(b_M(m))=[\eta,b]_M(m)$. 
	By Theorem \ref{cor: KerH symplectic form} we can write \eqref{form} as
	
	\begin{eqnarray*}
		\frac{1}{2}\omega_{\NH_1}\left(\eta_{\NH_1}(\widetilde{\nu}),\widetilde{\nu}\right) 
		&=& \frac{1}{2} \Psi(m)([\eta,x],x)\\
		&&+\frac{1}{2}\omega_{X_m}(m)\left([\eta,b]_M(m)+\eta_{\NH_1}(w),b_M(m)+w\right)\\
		&&+\frac{1}{2}\omega_{\NG_1}\left(\eta_{\NG_1}(\nu),\nu\right).
\end{eqnarray*}

By definition the second term of the above is $\frac{1}{2}\left(\langle f(w),[\eta,b]\rangle-\langle f(\eta_{\NH_1}(w)), b\rangle\right)$. Since the linear map $f$ is $H_m$-equivariant, $$\langle f(\eta_{\NH_1}(w)), b\rangle=\langle -\ad^*_{\eta}f(w),b\rangle=-\langle f(w),[\eta,b]\rangle.$$ 
Finally

	\begin{eqnarray*}
		\Psi(m)([\eta,x],x) &=& \langle\mu,[[\eta,x],x]\rangle\\
		&=& \langle\ad_x^*\mu,[x,\eta]\rangle\\
		&=&\langle (\ad_x^*)^2\mu,\eta\rangle.
	\end{eqnarray*} 

We thus obtain

	\begin{equation*}
		\langle \Phi_{\NH_1}(\widetilde{\nu}),\eta\rangle=\frac{1}{2} \langle (\ad_x^*)^2\mu,\eta\rangle+\langle -\ad_b^*f(w),\eta\rangle+\frac{1}{2}\omega_{\NG_1}\left(\eta_{\NG_1}(\nu),\nu\right).
	\end{equation*}
\end{proof}

\begin{example}[Abelian groups]\label{tori}
	Let $(M,\omega,G,\Phi_G)$ be a Hamiltonian $G$-manifold where $G$ is abelian and let $H$ be a subgroup of $G$. For simplicity we assume that this action is free i.e. all the stabilizers $G_m$ are trivial. If $m\in M$ has momentum $\mu=\Phi_G(m)$, then $\mathfrak{g}_{\mu}=\mathfrak{g}$ and $\mathfrak{h}_{\alpha}=\mathfrak{h}_{\mu}=\mathfrak{h}$. In particular $\mathfrak{g}_{\mu}+\mathfrak{h}_{\alpha}=\mathfrak{g}$. Since $G$ is abelian $\mathfrak{h}^{\perp_{\mu}}=\mathfrak{g}$, and thus $\mathfrak{s}(G,H,\mu)=0$ as it is the orthogonal complement of $\mathfrak{g}_{\mu}+\mathfrak{h}_{\alpha}$ in $\mathfrak{h}^{\perp_{\mu}}$. On the other hand $\mathfrak{b}=\mathfrak{h}^{\perp_{\mathfrak{g}}}$ is isomorphic to $\mathfrak{g}/\mathfrak{h}$. Theorem \ref{cor: KerJH} implies that
	
\begin{equation}
	\NH_1= \NG_1\oplus X_m\qquad\mbox{where}\qquad X_m\simeq\mathfrak{g}/\mathfrak{h}\oplus(\mathfrak{g}/\mathfrak{h})^*.
\end{equation}
\end{example}

\begin{example}\label{so(3) example}
	Let $(M,\omega,G,\Phi_G)$ where $G=\SO(3)$ is the group of rotations in $\R^3$. Assume that this action is free. Let $H=\SO(2)$ be the subgroup of rotations about the axis defined by a vector $x\in\R^3$. The Lie algebra $\mathfrak{g}$ is the space of $3\times 3$ skew-symmetric matrices. It is identified with $\R^3$ and so is its dual $\mathfrak{g}^*$ by using the standard dot product. Let $m\in M$ be a point with momentum $\Phi_G(m):=\mu\in\R^3$. Clearly $\mathfrak{g}_{\mu}:=\mbox{span}(\mu)\subset \R^3$ and $\mathfrak{h}:=\mbox{span}(x)\subset \R^3.$ Since $\ad_y^*\mu:=\mu\times y\in\R^3$ the symplectic orthogonal $\mathfrak{h}^{\perp_{\mu}}$ is the subspace of $\R^3$ defined by
	
	\begin{equation}\label{ex:hperpmu}
		\mathfrak{h}^{\perp_{\mu}}:=\lbrace y\in\R^3\mid (\mu\times x)\cdot y=0\rbrace.
	\end{equation}
There are three cases to be considered: (i) when $\mu$ and $x$ are not collinear, (ii) when they are collinear, (iii) when $\mu=0$.

\begin{enumerate}[label=(\roman*)]
	\item If $\mu$ and $x$ are not collinear then there are no elements in $H$ fixing $\mu$. Therefore $\mathfrak{h}_{\mu}=0$. As $H$ is abelian, $\mathfrak{h}_{\alpha}=\mathfrak{h}$ and thus $\mathfrak{g}_{\mu}+\mathfrak{h}_{\alpha}:=\mbox{span}(\mu,x)$. Furthermore $\mathfrak{h}^{\perp_{\mu}}:=\mbox{span}(\mu,x)$ is a $2$-dimensional plane by \eqref{ex:hperpmu}. We conclude that $\mathfrak{s}(G,H,\mu)=0$. The other subspace of interest is $\mathfrak{b}=(\mathfrak{g}_m+\mathfrak{h}_{\mu})^{\perp_{\mathfrak{g}_{\mu}}}$. In this case, as $\mathfrak{g}_m+\mathfrak{h}_{\mu}=0$, we deduce that $\mathfrak{b}=\mathfrak{g}_{\mu}:=\mbox{span}(\mu)$. By Theorem \ref{cor: KerJH} the symplectic slice for the $H$-action is given by
	
		\begin{equation}
			\NH_1= \NG_1\oplus X_m,\qquad \mbox{where}\qquad X_m\simeq\mathfrak{g}_{\mu}\oplus\mathfrak{g}_{\mu}^*.
		\end{equation}

	\item When $\mu$ and $x$ are collinear, all the elements of $H$ fix $\mu\in\R^3$. Consequently $\mathfrak{h}_{\mu}=\mathfrak{h}=\mathfrak{g}_{\mu}$ and $\mathfrak{h}^{\perp_{\mu}}=\mathfrak{g}:=\R^3.$ In this case $\mathfrak{s}(G,H,\mu)=\mathfrak{n}$ is a $2$-dimensional plane complementary to $\mathfrak{g}_{\mu}$ in $\mathfrak{g}$. However, as $\mathfrak{h}_{\mu}=\mathfrak{h}=\mathfrak{g}_{\mu}$, we find that $\mathfrak{b}=0$. Therefore,
	
		\begin{equation}\label{colinear}
			\NH_1= \NG_1\oplus \SS(G,H,\mu)\cdot m\qquad\mbox{where}\qquad \SS(G,H,\mu)\cdot m=\mathfrak{n}\cdot m.
		\end{equation}

	\item When $\mu=0$ we have $\mathfrak{g}_{\mu}=\mathfrak{g}$ and $\mathfrak{h}_{\mu}=\mathfrak{h}$. As $H$ is abelian, $\mathfrak{h}_{\alpha}=\mathfrak{h}$ and we find $\mathfrak{g}_{\mu}+\mathfrak{h}_{\alpha}=\mathfrak{g}:=\R^3$. This implies that $\mathfrak{s}(G,H,\mu)=0$. The subspace $\mathfrak{b}$ is just $\mathfrak{h}^{\perp_{\mathfrak{g}}}\simeq \mathfrak{g}/\mathfrak{h}$ which is a complement to $\mathfrak{h}$ in $\mathfrak{g}$. Therefore
	
		\begin{equation}\label{chi is zero}
			\NH_1= \NG_1\oplus X_m,\qquad\mbox{where}\qquad X_m\simeq \mathfrak{g}/\mathfrak{h}\oplus(\mathfrak{g}/\mathfrak{h})^*.
		\end{equation}

\end{enumerate}
\end{example}

\section{The case of the other subspaces}\label{section 4}

In this section we look at what happens to the other pieces of the decomposition relative to the $H$-action:

\begin{equation}\label{decomp for H}
	T_mM=\TH_0\oplus\TH_1\oplus\NH_0\oplus\NH_1.
\end{equation}

 According to \eqref{gmu decomposition} and \eqref{gmu+Ha}, we set 
 
 \begin{equation}\label{s5: TH_0 and NH_0}
 	\TH_0=\mathfrak{h}_{\alpha}\cdot m=\mH\cdot m\oplus\mathfrak{a}\cdot m\quad\mbox{ and }\quad\NH_0\simeq \mH^*\oplus \mathfrak{a}^*.
 \end{equation} 
 Recall that $\mathfrak{n}$ was defined such that $\mathfrak{g}=\mathfrak{g}_{\mu}\oplus\mathfrak{n}$. Similarly we define $\nH$ such that $\mathfrak{h}=\mathfrak{h}_{\alpha} \oplus \nH$. Hence $\TH_1=\nH\cdot m$ and the symplectic slice $\NH_1$ is as in Theorem \ref{cor: KerJH}. The same choice of splittings allows us to construct a Witt-Artin decomposition relative to the $G$-action:

\begin{equation}\label{decomp for G}
	T_mM=\TG_0\oplus\TG_1\oplus\NG_0\oplus\NG_1.
\end{equation} 
By \eqref{gmu decomposition} we get $\TG_0=\mathfrak{g}_{\mu}\cdot m=\mH\cdot m\oplus\mathfrak{b}\cdot m$. Thus $\TG_0$ is contributing to two different parts of \eqref{decomp for H}, namely $\TH_0$ by \eqref{s5: TH_0 and NH_0}, and $\NH_1$ by \eqref{NH1}. Furthermore $\NG_0$ is isomorphic to $\mathfrak{m}^*$, where $\mathfrak{m}$ is a complement of $\mathfrak{g}_m$ in $\mathfrak{g}_{\mu}$. By \eqref{m et n} there is an isomorphism $\NG_0\simeq \mH^*\oplus\mathfrak{b}^*$. Hence $\NG_0$ contributes to $\NH_0$ and $\NH_1$, by \eqref{s5: TH_0 and NH_0} and \eqref{NH1}. 

To specify $\TG_1$ note that $\mathfrak{h}^{\perp_{\mu}}\cdot m\cap\mathfrak{h}\cdot m=\mathfrak{h}_{\alpha}\cdot m$. Therefore $\nH\subset (\mathfrak{h}^{\perp_{\mu}})^{\perp_{\mathfrak{g}}}$ and let $\mathfrak{r}$ be a $G_m$-invariant complement in $(\mathfrak{h}^{\perp_{\mu}})^{\perp_{\mathfrak{g}}}$ so that $(\mathfrak{h}^{\perp_{\mu}})^{\perp_{\mathfrak{g}}}=\nH\oplus\mathfrak{r}$. By \eqref{stn} we have

\begin{equation}
	\mathfrak{n}=\mathfrak{a}\oplus\mathfrak{s}(G,H,\mu)\oplus\nH\oplus\mathfrak{r},
\end{equation}
which implies that $\TG_1=\mathfrak{n}\cdot m=\mathfrak{a}\cdot m\oplus\mathfrak{s}(G,H,\mu)\cdot m\oplus\nH\cdot m\oplus\mathfrak{r}\cdot m$. We show in Lemma \ref{last lemma} below that there is an isomorphism

\begin{equation}\label{iso a montrer}
	\TG_1=\mathfrak{n}\cdot m\simeq\mathfrak{a}\cdot m\oplus\mathfrak{s}(G,H,\mu)\cdot m\oplus\nH\cdot m\oplus\mathfrak{a}^*.
\end{equation}
Hence $\TG_1$ contributes to every subspaces appearing in \eqref{decomp for H}. Finally the symplectic slice $\NG_1$ contributes to $\NH_1$ only.

\begin{example}\label{ex: so(3)}

To illustrate the above discussion, we come back to Example \ref{so(3) example}. Note that since $H=SO(2)$ is abelian, the subspace $\TH_1$ is trivial. We have the two decompositions with respect to $G$ and $H$:

\begin{eqnarray*}
	T_mM & = &\TG_0\oplus\TG_1\oplus\NG_0\oplus \NG_1,\quad\mbox{where}\quad \TG_0\oplus\TG_1=\mathfrak{g}\cdot m= \R^3\quad\mbox{and}\quad \NG_0\simeq \TG_0^*,\\
	T_mM & = &\TH_0\oplus\NH_0\oplus \NH_1,\quad\mbox{where}\quad \TH_0=\mathfrak{h}\cdot m= \R\quad\mbox{and}\quad \NH_0\simeq \TH_0^*= \R^*.
\end{eqnarray*}

If $\mu\neq 0$, the stabilizer $G_{\mu}$ is a copy of $\SO(2)$ in $\SO(3)$. Its Lie algebra $\mathfrak{g}_{\mu}:=\mbox{span}(\mu)$ is a copy of $\R$ in $\R^3$. Therefore $\TG_0=\mathfrak{g}_{\mu}\cdot m$ is identified with a copy of $\R$ in $\R^3$, and $\TG_1= \R^2$ is a complement. Let $x\in \R^3$ such that $\mathfrak{h}:=\mbox{span}(x)$. Two cases occur:
\begin{enumerate}[label=(\roman*)]
	\item The vectors $\mu$ and $x$ are not collinear ($\mathfrak{g}_{\mu}\neq\mathfrak{h}$): In this case, $\TG_0\cap\TH_0=0$. We write $\TG_1= \R^2=\R\oplus\R$ where the first $\R$-factor corresponds to $\TH_0$ whereas the second $\R$-factor is a subspace of $\NH_1$.
	
	\item The vectors $\mu$ and $x$ are collinear ($\mathfrak{g}_{\mu}=\mathfrak{h}$): In this case, $\TG_0=\TH_0=\R$ and hence $\NG_0=\NH_0\simeq \R^*$.
	By \eqref{colinear} $\TG_1=\mathfrak{n}\cdot m$ is a subspace of $\NH_1$.
\end{enumerate}

	The remaining case is when $\mu=0$. In this case $\TG_1=0$ and $\TG_0=\R^3=\R\oplus\R^2$. In the latter, the first $\R$-factor corresponds to $\TH_0$ and the $\R^2$-factor corresponds to $\mathfrak{b}=\mathfrak{h}^{\perp_{\mathfrak{g}}}$ (cf. \eqref{chi is zero}) and this copy contributes to $\NH_1$, same for its dual $(\R^2)^*$. 
\end{example}

\begin{lemma}\label{last lemma}
	The space $Z_m:=\mathfrak{a}\cdot m \oplus \mathfrak{r} \cdot m$ is a symplectic vector subspace of $(T_mM,\omega(m))$ and is isomorphic to $\mathfrak{a}\oplus\mathfrak{a}^*$. In particular, $f:\mathfrak{r}\cdot m\to\mathfrak{a}^*$ is an isomorphism (cf. Theorem \ref{Witt} \ref{f definition}).
\end{lemma}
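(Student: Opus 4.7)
The plan is to reduce the lemma to the single claim that the bilinear pairing
\[
B:\mathfrak{a}\times\mathfrak{r}\to\R,\qquad B(u,v):=\langle\mu,[u,v]\rangle,
\]
is non-degenerate. Indeed, the restriction of $\omega(m)$ to $Z_m$ takes a block form with respect to $Z_m=\mathfrak{a}\cdot m\oplus\mathfrak{r}\cdot m$: the diagonal block on $\mathfrak{a}\cdot m$ vanishes since $\mathfrak{a}\subset\mathfrak{h}_\alpha$ gives $\Psi(m)(u,u')=\langle\alpha,[u,u']\rangle=0$, and the off-diagonal block pairing $\mathfrak{a}\cdot m$ against $\mathfrak{r}\cdot m$ is exactly $B$. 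A standard linear-algebra observation shows that an antisymmetric form with a vanishing diagonal block and an invertible off-diagonal block is non-degenerate, regardless of the remaining diagonal block. So non-degeneracy of $B$ suffices to conclude that $Z_m$ is symplectic; since $\dim Z_m=2\dim\mathfrak{a}$, Darboux yields $Z_m\simeq\mathfrak{a}\oplus\mathfrak{a}^*$ symplectically, and the map $f$ of the statement is simply $v_M(m)\mapsto B(\cdot,v)$.

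To establish non-degeneracy of $B$, I first handle the dimension equality $\dim\mathfrak{r}=\dim\mathfrak{a}$. Using \eqref{Hperpmu new} and the definition of $\nH$, this equality reduces to
\[
\dim\mathfrak{g}-\dim\mathfrak{h}^{\perp_\mu}=\dim\mathfrak{h}-\dim\mathfrak{h}_\mu,
\]
which I read off from the setting of Proposition \ref{s as a slice}: the image of the differential at $\mu$ of the $H$-momentum map on $G\cdot\mu$ equals the annihilator $\mathfrak{h}_\mu^\circ\subset\mathfrak{h}^*$, by the general fact that the image of a Hamiltonian momentum map's differential at a point equals the annihilator of the stabilizer subalgebra there.

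For the injectivity of $v\in\mathfrak{r}\mapsto B(\cdot,v)\in\mathfrak{a}^*$, the kernel is $\mathfrak{r}\cap A(\mathfrak{a})^\circ$, where $A:\mathfrak{a}\to\mathfrak{g}^*$ is $u\mapsto\ad_u^*\mu$ and ${}^\circ$ denotes the annihilator in $\mathfrak{g}$. The structural identity at the heart of the argument is
\[
A(\mathfrak{a})^\circ=\mathfrak{h}+\mathfrak{h}^{\perp_\mu}.
\]
The inclusion $\supset$ comes from two direct checks: $\mathfrak{h}\subset A(\mathfrak{a})^\circ$ since $u\in\mathfrak{h}_\alpha$ forces $\langle\mu,[u,\eta]\rangle=\langle\alpha,[u,\eta]\rangle=0$ for $\eta\in\mathfrak{h}$, and $\mathfrak{h}^{\perp_\mu}\subset A(\mathfrak{a})^\circ$ since $\langle\mu,[u,y]\rangle=-\langle\ad_y^*\mu,u\rangle=0$ for $u\in\mathfrak{a}\subset\mathfrak{h}$ and $y\in\mathfrak{h}^{\perp_\mu}$ by Proposition \ref{prop: kmu}. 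Equality then follows by a dimension count: $A$ is injective (since $\mathfrak{a}\cap\mathfrak{g}_\mu=\mathfrak{a}\cap\mathfrak{h}_\mu=0$) so $\dim A(\mathfrak{a})^\circ=\dim\mathfrak{g}-\dim\mathfrak{a}$, while $\dim(\mathfrak{h}+\mathfrak{h}^{\perp_\mu})=\dim\mathfrak{h}+\dim\mathfrak{h}^{\perp_\mu}-\dim\mathfrak{h}_\alpha$ collapses to the same value after inserting the identity from the previous paragraph.

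Finally, I intersect with $(\mathfrak{h}^{\perp_\mu})^{\perp_\mathfrak{g}}$: any element of $(\mathfrak{h}+\mathfrak{h}^{\perp_\mu})\cap(\mathfrak{h}^{\perp_\mu})^{\perp_\mathfrak{g}}$ can be written as $\eta+y$ with $\eta\in\mathfrak{h}$ and $y\in\mathfrak{h}^{\perp_\mu}$; splitting $\eta$ along $\mathfrak{h}=\mathfrak{h}_\alpha\oplus\nH$ and using $\mathfrak{h}_\alpha\subset\mathfrak{h}^{\perp_\mu}$ together with $\nH\subset(\mathfrak{h}^{\perp_\mu})^{\perp_\mathfrak{g}}$, uniqueness of the splitting $\mathfrak{g}=\mathfrak{h}^{\perp_\mu}\oplus(\mathfrak{h}^{\perp_\mu})^{\perp_\mathfrak{g}}$ forces the element to lie in $\nH$. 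Since $\mathfrak{r}\cap\nH=0$ by the very definition of $\mathfrak{r}$, we obtain $\mathfrak{r}\cap A(\mathfrak{a})^\circ=0$, and injectivity is proved. The hardest step is spotting the identity $A(\mathfrak{a})^\circ=\mathfrak{h}+\mathfrak{h}^{\perp_\mu}$, which cleanly bundles the defining property of $\mathfrak{a}\subset\mathfrak{h}_\alpha$ with the symplectic characterization of $\mathfrak{h}^{\perp_\mu}$ given by Proposition \ref{prop: kmu}; once it is in hand everything else is bookkeeping against the decompositions from $\S$\ref{section 1} and $\S$\ref{section 4}.
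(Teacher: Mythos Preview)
Your proof is correct, and it takes a genuinely different route from the paper's.

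The paper argues geometrically via the ambient Witt--Artin decomposition for $H$: it takes $x\in\mathfrak{a}\oplus\mathfrak{r}$ in the radical of $\omega(m)|_{Z_m}$ and shows $\Psi(m)(x,z)=0$ for every $z\in\mathfrak{n}$, by appealing to the symplectic orthogonality $Z_m\subset\TH_0\oplus\NH_0=(\TH_1\oplus\NH_1)^{\omega(m)}$ to kill the $\mathfrak{s}(G,H,\mu)$ and $\nH$ components. Non-degeneracy on $\mathfrak{n}\cdot m$ then forces $x_M(m)=0$. The Lagrangian property of $\mathfrak{a}\cdot m$ is obtained afterwards from $\mathfrak{a}\subset\mathfrak{h}\cap\mathfrak{h}^{\perp_\mu}$, and the isomorphism $Z_m\simeq\mathfrak{a}\oplus\mathfrak{a}^*$ follows. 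In particular, the paper never separately computes $\dim\mathfrak{r}$; that equality drops out of the Lagrangian statement.

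Your argument is purely Lie-algebraic and does not invoke the $H$-decomposition of $T_mM$ at all. You isolate the off-diagonal pairing $B(u,v)=\langle\mu,[u,v]\rangle$ and prove it is a perfect pairing in two steps: a dimension identity $\dim\mathfrak{g}-\dim\mathfrak{h}^{\perp_\mu}=\dim\mathfrak{h}-\dim\mathfrak{h}_\mu$ (extracted from the rank of $D\Phi(\mu)$ on the coadjoint orbit, as in Proposition~\ref{s as a slice}), and the structural identity $A(\mathfrak{a})^\circ=\mathfrak{h}+\mathfrak{h}^{\perp_\mu}$, whose intersection with $(\mathfrak{h}^{\perp_\mu})^{\perp_\mathfrak{g}}$ you correctly identify as $\nH$. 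This is more hands-on but has the virtue of being entirely self-contained: it does not presuppose that $\mathfrak{r}\cdot m$ already sits inside a chosen $\NH_0$, which in the paper's argument is the step carrying the most weight. What your approach buys is independence from the $H$-tube machinery; what the paper's approach buys is brevity, given that the surrounding sections have already set up the $H$-decomposition.
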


\begin{proof}
	To show the first statement we use the same strategy as in the proof of Lemma \ref{symplectic thing st}, by using the fact that $\mathfrak{n}\cdot m$ is a symplectic vector subspace of $(T_mM,\omega(m))$ where
	
\begin{equation}\label{pf: n}
		\mathfrak{n}=\mathfrak{a}\oplus\mathfrak{r}\oplus \mathfrak{s}(G,H,\mu)\oplus \nH.
\end{equation}
Let $x_M(m)\in Z_m$ such that $\Psi(m)(x,y)=\omega(m)(x_M(m),y_M(m))=0$ for every $y_M(m)\in Z_m$. We will show that $\Psi(m)(x,z)=0$ for every $z\in\mathfrak{n}$. Pick some $z\in\mathfrak{n}$ and write it as $z=y+u+v$ where $y\in \mathfrak{a}\oplus\mathfrak{r}$, $u\in\mathfrak{s}(G,H,\mu)$ and $v\in\nH$ by \eqref{pf: n}. Then

\begin{equation*}
	\Psi(m)(x,z) = \Psi(m)(x,y)+\Psi(m)(x,u)
	+\Psi(m)(x,v) = 0.
\end{equation*}
Indeed, the first term vanishes by hypothesis. The remaining terms vanish because $Z_m\subset \TH_0\oplus\NH_0$ whereas $\nH\cdot m\subset\TH_1$ and $\mathfrak{s}(G,H,\mu)\cdot m\subset\NH_1$. Since those subspaces are symplectically orthogonal, $\Psi(m)(x,u)=0$ as well as $\Psi(m)(x,v)=0$. Using that $\omega(m)$ is non-degenerate on $\mathfrak{n}\cdot m$, we get $x_M(m)=0$.

The second statement follows if we show that $\mathfrak{a}\cdot m$ is a Lagrangian subspace of $Z_m$, that is, $\omega(m)$ vanishes identically on $\mathfrak{a}\cdot m$. Let $x,y\in\mathfrak{a}$. In particular $x\in\mathfrak{h}^{\perp_{\mu}}$ and $y\in\mathfrak{h}$ by construction of $\mathfrak{a}$ (cf. \eqref{gmu+Ha}). Therefore

\begin{equation}
	\Psi(m)(x,y)=\langle \mu,[x,y]\rangle=0,
\end{equation} 
which shows that $\mathfrak{a}\cdot m$ is a Lagrangian subspace of $Z_m$. In particular, there is an isomorphism $Z_m\simeq \mathfrak{a}\cdot m\oplus (\mathfrak{a}\cdot m)^*$. Since $\mathfrak{a}$ has trivial intersection with $\mathfrak{g}_m$, we get that $\mathfrak{a}\cdot m\simeq \mathfrak{a}$ and thus $Z_m\simeq \mathfrak{a}\oplus\mathfrak{a}^*$.
\end{proof}






\vspace{0.5cm}
\begin{minipage}[t]{7cm}
{marine.fontaine.math@gmail.com}\\
{\tt School of Mathematics \\
University of Manchester\\
M$13$ $9$PL Manchester, UK}

\end{minipage}\hfill

\end{document}